\newcommand{\C}{\mathcal{C}}
\newcommand{\G}{\mathcal{G}}
\newcommand{\I}{\mathcal{I}}
\renewcommand{\S}{\mathcal{S}}
\renewcommand{\P}{\mathcal{P}}
\newcommand{\Ker}[1]{\operatorname{Ker}(#1)}
\newcommand{\E}{\mathbb{E}}
\newcommand{\s}{\mathfrak{s}}
\newcommand{\F}{\mathcal{F}}
\newcommand{\R}{\mathcal{R}}
\newcommand{\Ab}{Ab}
\newcommand{\widebar}[1]{\overline{#1}}
\newcommand{\xrightleftarrows}[2]{%
  \mathrel{\mathop{%
    \vcenter{\offinterlineskip\m@th
      \ialign{\hfil##\hfil\cr
        \hphantom{$\scriptstyle\mspace{8mu}{#1}\mspace{8mu}$}\cr
        \rightarrowfill\cr
        \vrule height0pt width 2em\cr
        \leftarrowfill\cr
        \hphantom{$\scriptstyle\mspace{8mu}{#2}\mspace{8mu}$}\cr
        \noalign{\kern-0.3ex}
      }%
    }%
  }\limits^{#1}_{#2}}%
}
\newtheorem{theorem}{Theorem}[section]
\newtheorem{lemma}[theorem]{Lemma}
\newtheorem{corollary}[theorem]{Corollary}
\newtheorem{proposition}[theorem]{Proposition}
\theoremstyle{definition}
\newtheorem*{acknowledgements}{Acknowledgements}
\newtheorem{definition}[theorem]{Definition}
\newtheorem{remark}[theorem]{Remark}
\newtheorem{example}[theorem]{Example}
\title{The Grothendieck group of an $n$-exangulated category}
\author{Johanne Haugland}
\begin{document}

\keywords{Grothendieck group, $n$-exangulated category, $(n+2)$-angulated category, $n$-exact category, $n$-exangulated subcategory, extriangulated subcategory}
\subjclass[2010]{18E10, 18E30, 18F30} 

\address{Department of mathematical sciences, NTNU, NO-7491 Trondheim, Norway}
\email{johanne.haugland@ntnu.no}

\begin{abstract}
We define the Grothendieck group of an $n$-exangulated category. For $n$ odd, we show that this group shares many properties with the Grothendieck group of an exact or a triangulated category. In particular, we classify dense complete subcategories of an $n$-exangulated category with an $n$-(co)generator in terms of subgroups of the Grothendieck group. This unifies and extends results of Thomason, Bergh--Thaule, Matsui and Zhu--Zhuang for triangulated, $(n+2)$-angulated, exact  and extriangulated categories, respectively. We also introduce the notion of an $n$-exangulated subcategory and prove that the subcategories in our classification theorem carry this structure.
\end{abstract}

\maketitle

\section{Introduction}

The Grothendieck group of an exact category is the free abelian group generated by isomorphism classes of objects modulo the Euler relations coming from short exact sequences. Similarly, one obtains the Grothendieck group of a triangulated category by factoring out the relations corresponding to distinguished triangles. It turns out that subcategories of certain categories relate to subgroups of the associated Grothendieck groups in an interesting way. More precisely, Thomason proved that there is a one-to-one correspondence between subgroups of the Grothendieck group of a triangulated category and dense triangulated subcategories \cite{Thomason}*{Theorem 2.1}. This was generalized to $(n+2)$-angulated categories with $n$ odd by Bergh--Thaule \cite{Bergh-Thaule}*{Theorem 4.6}. Later, Matsui gave an analogous result for exact categories with a (co)generator \cite{Matsui}*{Theorem 2.7}. 

The notion of extriangulated categories was introduced by Nakaoka--Palu as a simultaneous generalization of exact categories and triangulated categories \cite{Nakaoka-Palu}. Many concepts and results concerning exact and triangulated categories have been unified and extended using this framework, see for instance \cite{Iyama-Nakaoka-Palu} for a generalization of Auslander--Reiten theory in exact and triangulated categories to this context.

In both higher dimensional Auslander--Reiten theory and higher homological algebra, $n$-cluster tilting subcategories of exact and triangulated categories play a fundamental role \cites{Iyama,Iyama-Oppermann}. This was a starting point for developing the theory of $(n+2)$-angulated categories and $n$-exact categories in the sense of Geiss--Keller--Oppermann \cite{Geiss-Keller-Oppermann} and Jasso \cite{Jasso}. Recently, Herschend--Liu--Nakaoka defined $n$-exangulated categories as a higher dimensional analogue of extriangulated categories \cite{Herschend-Liu-Nakaoka}. Many categories studied in representation theory turn out to be $n$-exangulated. In particular, $n$-exangulated categories simultaneously generalize $(n+2)$-angulated and $n$-exact categories. In \cite{Herschend-Liu-Nakaoka}*{Section 6} several explicit examples of $n$-exangulated categories are given. See also \cite{Liu-Zhou}*{Section 4} for a construction which yields more $n$-exangulated categories that are neither $n$-exact nor $(n+2)$-angulated.

Inspired by the classification results for triangulated, \mbox{$(n+2)$}-angulated and exact categories mentioned above, a natural question to ask is whether there is a similar connection between subcategories and subgroups of the Grothendieck group for $n$-exangulated categories. Independently of our work, Zhu--Zhuang recently gave a partial answer to this question in the case $n=1$ \cite{Zhu-Zhuang}*{Theorem 5.7}. In this paper we prove a more general classification result for $n$-exangulated categories with $n$ odd. In our main result, \cref{main theorem}, we classify dense complete subcategories of an $n$-exangulated category with an $n$-(co)generator $\G$ in terms of subgroups of the Grothendieck group containing the image of $\G$. This recovers both the result of Zhu--Zhuang for extriangulated categories and the result of Bergh--Thaule for $(n+2)$-angulated categories, as well as Thomason's and Matsui's results for triangulated and exact categories, see \cref{corollary} for details. Our main theorem also yields new classification results for $n$-exact categories, as well as for $n$-exangulated categories which are neither $(n+2)$-angulated nor $n$-exact.

The paper is organized as follows. In Section 2 we recall the definition of an $n$-exangulated category and review some results. In Section 3 we explain terminology which is needed in our main result, such as the notion of an $n$-(co)generator, complete subcategories and dense subcategories. We also introduce $n$-exangulated subcategories and prove that the subcategories which will appear in our classification theorem carry this structure. In Section 4 we define the Grothendieck group of an $n$-exangulated category and discuss some basic results. In Section 5 we state and prove our main theorem and explain how this unifies and extends already known results. 

\section{Preliminaries on $n$-exangulated categories}

Throughout this paper, let $n$ be a positive integer and $\C$ an additive category. In this section we briefly recall the definition of an $n$-exangulated category and related notions, as well as some known results which will be used later. All of this is taken from \cite{Herschend-Liu-Nakaoka}, and we recommend to consult this paper for more detailed explanations. 

Recall from \cite{Nakaoka-Palu} that an extriangulated category $(\C,\E,\s)$ consists of an additive category $\C$, a biadditive functor $\E \colon \C^{\operatorname{op}} \times \C \rightarrow \Ab$ and an additive realization $\s$ of $\E$ satisfying certain axioms. The functor $\E$ is modelled after $\operatorname{Ext}^1$. Given two objects $A$ and $C$ in $\C$, the realization $\s$ associates to each element $\delta \in \E(C,A)$ an equivalence class $\s (\delta)$ of $3$-term sequences in $\C$ starting in $A$ and ending in $C$. Exact and triangulated categories are examples of extriangulated categories, where short exact sequences and distinguished triangles play the roles of these $3$-term sequences. Analogously, an $n$-exangulated category also consists of a triplet $(\C,\E,\s)$, where the main difference is that we consider $(n+2)$-term sequences instead of $3$-term sequences. In order to give the precise definition, we need to be able to talk about extensions and morphisms of extensions.

\begin{definition}
Let $\E \colon \C^{\operatorname{op}} \times \C \rightarrow \Ab$ be a biadditive functor. Given two objects $A$ and $C$ in $\C$, an element $\delta \in \E (C,A)$ is called an \textit{$\E$-extension} or simply an \textit{extension}. We can write such an extension $\delta$ as ${}_A \delta_C$ whenever we wish to specify the objects $A$ and $C$.
\end{definition}

Given an extension $\delta \in \E(C,A)$ and two morphisms $a \in \C (A,A')$ and \mbox{$c \in \C(C',C)$}, we denote the extensions
\[
\E(C,a)(\delta) \in \E(C,A') ~\text{ and }~ \E(c,A)(\delta) \in \E(C',A)
\]
by $a_* \delta$ and $c^* \delta$. Notice that $\E(c,a)(\delta)=c^*a_* \delta = a_* c^* \delta$ in $\E(C',A')$ as $\E$ is a bifunctor.

For any pair of objects $A$ and $C$, the zero element ${}_A 0_C$ in $\E(C,A)$ is called the \textit{split extension}. 

\begin{definition}
Given extensions ${}_A \delta_C$ and ${}_B \rho_D$, a \textit{morphism of extensions} $(a,c) \colon \delta \rightarrow \rho$ is a pair of morphisms $a \in \C(A,B)$ and $c \in \C(C,D)$ such that $a_* \delta = c^* \rho$ in $\E(C,B)$.
\end{definition}

We want to associate each extension ${}_A \delta_C$ to an equivalence class of $(n+2)$-term sequences in $\C$ starting in $A$ and ending in $C$. Our next aim is hence to discuss some terminology which will enable us to describe the appropriate equivalence relation on the class of such $(n+2)$-term sequences.

\begin{definition}
Let $\textbf{C}_\C$ denote the category of complexes in $\C$. We define $\textbf{C}_\C^{n+2}$ to be the full subcategory of $\textbf{C}_\C$ consisting of complexes whose components are zero in all degrees outside of $\{0,1,\dots,n+1\}$. In other words, an object in $\textbf{C}_\C^{n+2}$ is a complex $X_\bullet = \{X_i,d_i\}$ of the form
\[
X_0 \xrightarrow{d_0} X_1 \rightarrow \cdots \rightarrow X_n \xrightarrow{d_n} X_{n+1}.
\]
Morphisms in $\textbf{C}_\C^{n+2}$ are written $f_\bullet = (f_0,f_1,\dots,f_{n+1})$, where we only indicate the terms of degree $0,1,\dots,n+1$.
\end{definition}

Our next two definitions should remind the reader about the long exact $\operatorname{Hom}$-$\operatorname{Ext}$-sequence associated to a short exact sequence and the long exact $\operatorname{Hom}$-sequence associated to a distinguished triangle.

\begin{definition}
By the Yoneda lemma, an extension $\delta \in \E(C,A)$ induces natural transformations
\[
\delta_\sharp \colon \C(-,C) \rightarrow \E(-,A) ~\text{ and }~ \delta^\sharp \colon \C(A,-) \rightarrow \E(C,-).
\]
For an object $X$ in $\C$, the morphisms $(\delta_\sharp)_X$ and $\delta^\sharp_X$ are given by
    \begin{enumerate}
        \item $(\delta_\sharp)_X \colon \C(X,C) \rightarrow \E(X,A)$, $f \mapsto f^*\delta$;
        \item $\delta^\sharp_X \colon \C(A,X) \rightarrow \E(C,X)$, $g \mapsto g_*\delta$.
    \end{enumerate}
\end{definition}

Consider a pair $\langle X_\bullet,\delta \rangle$ with $X_\bullet$ in $\textbf{C}_\C^{n+2}$ and $\delta \in \E(X_{n+1},X_0)$. Using our natural transformations from above, we can associate to $\langle X_\bullet,\delta \rangle$ the following two sequences of functors:
\begin{enumerate}
    \item $\C(-,X_0) \xrightarrow{\mathmakebox[2.5em]{\C(-,d_0)}} \cdots \xrightarrow{\mathmakebox[2.5em]{\C(-,d_n)}} \C(-,X_{n+1}) \xrightarrow{\mathmakebox[2.5em]{\delta_\sharp}} \E(-,X_0)$;
    \item $\C(X_{n+1},-) \xrightarrow{\mathmakebox[2.5em]{\C(d_n,-)}} \cdots \xrightarrow{\mathmakebox[2.5em]{\C(d_0,-)}} \C(X_0,-) \xrightarrow{\mathmakebox[2.5em]{\delta^\sharp}} \E(X_{n+1},-)$.
\end{enumerate}
We are particularly interested in pairs $\langle X_\bullet,\delta \rangle$ for which these sequences are exact.

\begin{definition}
When the two sequences of functors from above are exact, we say that the pair $\langle X_\bullet,\delta \rangle$ is an \textit{$n$-exangle}. Given two $n$-exangles $\langle X_\bullet,\delta \rangle$ and $\langle Y_\bullet,\rho \rangle$, a \textit{morphism of $n$-exangles} $f_\bullet \colon \langle X_\bullet,\delta \rangle \rightarrow \langle Y_\bullet,\rho \rangle$ is a chain map \mbox{$f_\bullet \in \textbf{C}_\C^{n+2}(X_\bullet,Y_\bullet)$} for which $(f_0,f_{n+1}) \colon \delta \rightarrow \rho$ is also a morphism of extensions.
\end{definition}

In order to define our equivalence classes of $(n+2)$-term sequences, we need a notion of homotopy. Two morphisms in $\textbf{C}_\C^{n+2}$ are said to be \textit{homotopic} if they are homotopic as morphisms of $\textbf{C}_\C$ in the usual way. We let the homotopy category $\textbf{K}_\C^{n+2}$ be the quotient of $\textbf{C}_\C^{n+2}$ by the ideal of null-homotopic morphisms. 

Instead of working with $\textbf{C}_\C^{n+2}$ and $\textbf{K}_\C^{n+2}$, we want to fix the end-terms of our sequences. 

\begin{definition}
Let $A$ and $C$ be objects in $\C$. We define $\textbf{C}_{(\C;A,C)}^{n+2}$ to be the subcategory of $\textbf{C}_\C^{n+2}$ consisting of complexes $X_\bullet$ with $X_0 = A$ and $X_{n+1}=C$. Morphisms in $\textbf{C}_{(\C;A,C)}^{n+2}$ are given by chain maps $f_\bullet$ for which $f_0 = 1_A$ and $f_{n+1}=1_C$.
\end{definition}

Whenever the category $\C$ is clear from the context, we abbreviately denote $\textbf{C}_{(\C;A,C)}^{n+2}$ by $\textbf{C}_{(A,C)}^{n+2}$.
Notice that $\textbf{C}_{(A,C)}^{n+2}$ is no longer an additive category. However, we can still take the quotient of $\textbf{C}_{(A,C)}^{n+2}$ by the same homotopy relation as in $\textbf{C}_\C^{n+2}$. This yields $\textbf{K}_{(A,C)}^{n+2}$, which is a subcategory of $\textbf{K}_\C^{n+2}$.

We are now ready to describe an equivalence relation on the class of $(n+2)$-term sequences starting in $A$ and ending in $C$.

\begin{definition}
A morphism $f_\bullet \in \textbf{C}_{(A,C)}^{n+2}(X_\bullet,Y_\bullet)$ is called a \textit{homotopy equivalence} if it induces an isomorphism in $\textbf{K}_{(A,C)}^{n+2}$. Two objects $X_\bullet$ and $Y_\bullet$ in $\textbf{C}_{(A,C)}^{n+2}$ are called \textit{homotopy equivalent} if there is some homotopy equivalence between them. We denote the homotopy equivalence class of $X_\bullet$ by $[X_\bullet]$.
\end{definition}

It should be noted that homotopy equivalence classes taken in $\textbf{C}_{(A,C)}^{n+2}$ and in $\textbf{C}_\C^{n+2}$ may be different. We will only use the notation $[X_\bullet]$ for equivalence classes taken in $\textbf{C}_{(A,C)}^{n+2}$.

We are now ready to explain our desired connection between extensions ${}_A \delta_C$ and equivalence classes $[X_\bullet]$ in $\textbf{C}_{(A,C)}^{n+2}$.

\begin{definition}
Let $\s$ be a correspondence which associates a homotopy equivalence class $\s(\delta)=[X_\bullet]$ in $\textbf{C}_{(A,C)}^{n+2}$ to each extension $\delta \in \E(C,A)$. We call $\s$ a \textit{realization of $\E$} if it satisfies the following condition for any $\s(\delta)=[X_\bullet]$ and $\s(\rho)=[Y_\bullet]$:
\begin{enumerate}
    \item[(R0)] Given any morphism of extensions $(a,c) \colon \delta \rightarrow \rho$, there exists a morphism $f_\bullet \in \textbf{C}_\C^{n+2}(X_\bullet,Y_\bullet)$ of the form $f_\bullet = (a,f_1,\dots,f_n,c)$. Such an $f_\bullet$ is called a \textit{lift} of $(a,c)$.
\end{enumerate}
Whenever $\s(\delta)=[X_\bullet]$, we say that $X_\bullet$ \textit{realizes} $\delta$. A realization $\s$ is called \textit{exact} if in addition the following conditions hold:
\begin{enumerate}
    \item[(R1)] Given any $\s(\delta)=[X_\bullet]$, the pair $\langle X_\bullet,\delta \rangle$ is an $n$-exangle.
    \item[(R2)] Given any object $A$ in $\C$, we have \[\s({}_A 0_0) = [
A \xrightarrow{1_A} A \rightarrow 0 \rightarrow \cdots \rightarrow 0 
],\]
and dually
\[\s({}_0 0_A) = [0 \rightarrow \cdots \rightarrow 0 \rightarrow A \xrightarrow{1_A} A ].\]
\end{enumerate}
\end{definition}

It is not immediately clear that the condition (R1) does not depend on our choice of representative of the class $[X_\bullet]$. For this fact, see \cite{Herschend-Liu-Nakaoka}*{Proposition 2.16}.

Based on the definition above, we can introduce some useful terminology.

\begin{definition}
Let $\s$ be an exact realization of $\E$.
\begin{enumerate}
    \item An $n$-exangle $\langle X_\bullet,\delta \rangle$ will be called a \textit{distinguished $n$-exangle} if $\s(\delta) = [X_\bullet]$. 
    \item An object $X_\bullet \in \textbf{C}_\C^{n+2}$ will be called a \textit{conflation} if it realizes some extension $\delta \in \E(X_{n+1},X_0)$.
    \item A morphism $f$ in $\C$ will be called an \textit{inflation} if there exists some conflation $X_\bullet = \{X_i,d_i\}$ satisfying $d_0 = f$.
    \item A morphism $g$ in $\C$ will be called a \textit{deflation} if there exists some conflation $X_\bullet = \{X_i,d_i\}$ satisfying $d_n = g$.
\end{enumerate}
\end{definition}

Recall that for triangulated categories, the octahedral axiom can be replaced by a mapping cone axiom \cites{Neeman 1991,Neeman 2001}. This should be thought of as a background for the definition of an $n$-exangulated category. Before we can give the definition, we need the notion of a mapping cone in our context. 

\begin{definition}
Let $f_\bullet \in \textbf{C}_\C^{n+2}(X_\bullet,Y_\bullet)$ be a morphism with $f_0 = 1_A$ for some object $A=X_0=Y_0$ in $\C$. The \textit{mapping cone} of $f_\bullet$ is the complex $M^f_\bullet \in \textbf{C}_\C^{n+2}$ given by
\[
X_1 \xrightarrow{d_0} X_2 \oplus Y_1 \xrightarrow{d_1} X_3 \oplus Y_2 \xrightarrow{d_2} \cdots \xrightarrow{d_{n-1}} X_{n+1} \oplus Y_n \xrightarrow{d_n} Y_{n+1},
\]
where
\[
d_i = 
\begin{cases}
        \vspace{-1em}
        \begin{bmatrix}
         -d_1^X \\
        f_1 
        \end{bmatrix} & \operatorname{if}~~ i=0 \\ \\ \vspace{-1em}
        \begin{bmatrix}
        -d_{i+1}^X & 0 \\
        f_{i+1} & d_i^Y 
        \end{bmatrix} & \operatorname{if}~~ i = 1,2,\dots,n-1 \\ \\ 
        \begin{bmatrix}
        f_{n+1} & d_n^Y 
        \end{bmatrix} & \operatorname{if}~~ i = n.
    \end{cases} \vspace{0.4em}
\]
The \textit{mapping cocone} of a morphism $g_\bullet$ where $g_{n+1}$ is the identity on some object, is defined dually.
\end{definition}

\begin{definition}
An \textit{$n$-exangulated category} is a triplet $(\C,\E,\s)$ of an additive category $\C$, a biadditive functor $\E \colon \C^{\operatorname{op}} \times \C \rightarrow \Ab$ and an exact realization $\s$ of $\E$, satisfying the following axioms:
\begin{enumerate}
    \item [(EA1)]The class of inflations in $\C$ is closed under composition. Dually, the class of deflations in $\C$ is closed under composition.
    \item [(EA2)]For an extension $\delta \in \E(D,A)$ and a morphism $c \in \C(C,D)$, let $\langle X_\bullet, c^*\delta \rangle$ and $\langle Y_\bullet, \delta \rangle$ be distinguished $n$-exangles. Then there exists a \textit{good lift} $f_\bullet$ of $(1_A,c)$, meaning that the mapping cone of $f_\bullet$ gives a distinguished $n$-exangle $\langle M_\bullet^f, (d_0^X)_*\delta \rangle$.
    \item [(EA2)$^{\operatorname{op}}$]Dual of (EA2).
\end{enumerate}
\end{definition}

The condition (EA2) is actually independent of choice of representatives of the equivalence classes $[X_\bullet]$ and $[Y_\bullet]$, see \cite{Herschend-Liu-Nakaoka}*{Corollary 2.31}. Note that we will often not mention $\E$ and $\s$ explicitly when we talk about an $n$-exangulated category $\C$.

Not too surprisingly, a $1$-exangulated category is the same as an extriangulated category \cite{Herschend-Liu-Nakaoka}*{Proposition 4.3}. It should also be noted that $n$-exact and $(n+2)$-angulated categories are $n$-exangulated \cite{Herschend-Liu-Nakaoka}*{Proposition 4.34 and 4.5}. For a discussion of examples of $n$-exangulated categories which are neither $n$-exact nor $(n+2)$-angulated, see \cite{Herschend-Liu-Nakaoka}*{Section 6.3} and \cite{Liu-Zhou}*{Section 4}.

In our study of subcategories of $n$-exangulated categories in \cref{subcategories} and \cref{classification}, the notion of extension-closed subcategories will be relevant. 

\begin{definition}
Let $(\C,\E,\s)$ be an $n$-exangulated category. A full additive subcategory $\S \subseteq \C$ which is closed under isomorphisms is called \textit{extension-closed} if for any pair of objects $A$ and $C$ in $\S$ and any extension $\delta \in \E(C,A)$, there is a distinguished $n$-exangle $\langle X_\bullet,\delta \rangle$ with $X_i$ in $\S$ for $i=1,\dots,n$.
\end{definition}

Extension-closed subcategories inherit structure from the ambient category in a natural way. The following result is \cite{Herschend-Liu-Nakaoka}*{Proposition 2.35}.

\begin{proposition} \label{extension-closed}
Let $(\C,\E,\s)$ be an $n$-exangulated category and $\S$ an extension-closed subcategory of $\C$. Given objects $A$ and $C$ in $\S$ and an extension $\delta \in \E(C,A)$, let $\langle X_\bullet,\delta \rangle$ be a distinguished $n$-exangle with $X_i$ in $\S$ for $i=1,\dots,n$. Define $\mathfrak{t}(\delta) = [X_\bullet]$, where the equivalence class is taken in $\textbf{C}_{(\S;A,C)}^{n+2}$. The following statements hold:
\begin{enumerate}
    \item The correspondence $\mathfrak{t}$ is an exact realization of the restricted functor $\E|_{\S^{\operatorname{op}} \times \S}$, and $(\S,\E|_{\S^{\operatorname{op}} \times \S},\mathfrak{t})$ satisfies (EA2) and (EA2)$^{\operatorname{op}}$.
    \item If $(\S,\E|_{\S^{\operatorname{op}} \times \S},\mathfrak{t})$ satisfies (EA1), then it is an $n$-exangulated category.
\end{enumerate}
\end{proposition}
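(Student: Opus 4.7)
The plan is to verify the axioms of an $n$-exangulated category piece by piece, using the ambient structure on $\C$ together with the fact that $\S$ is a full additive subcategory that is extension-closed.

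First I would check that $\mathfrak{t}$ is a well-defined correspondence on $\E|_{\S^{\operatorname{op}} \times \S}$. Given $\delta \in \E(C,A)$ with $A,C \in \S$, extension-closedness supplies a distinguished $n$-exangle $\langle X_\bullet,\delta \rangle$ with $X_1,\dots,X_n \in \S$; if $\langle Y_\bullet,\delta \rangle$ is another such, then $[X_\bullet]=[Y_\bullet]$ in $\textbf{K}_{(\C;A,C)}^{n+2}$, and since every component of any chain map and every component of any homotopy between $X_\bullet$ and $Y_\bullet$ is a morphism between objects of $\S$, fullness of $\S$ in $\C$ guarantees the same equivalence holds in $\textbf{K}_{(\S;A,C)}^{n+2}$. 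This gives well-definedness of $\mathfrak{t}$.

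Next I would verify the realization axioms. For (R0), given a morphism of extensions $(a,c)\colon\delta \to \rho$ between extensions supported in $\S$, apply (R0) for $\s$ to obtain a lift $f_\bullet \in \textbf{C}_\C^{n+2}(X_\bullet,Y_\bullet)$; the middle components $f_1,\dots,f_n$ are morphisms between objects of $\S$, hence lie in $\S$ by fullness, so $f_\bullet$ is a morphism in $\textbf{C}_\S^{n+2}$. For (R1), since $X_i \in \S$ for $0 \le i \le n+1$, the representable sequences $\S(-,X_0)\to\cdots\to\S(-,X_{n+1})\to \E|_{\S^{\operatorname{op}} \times \S}(-,X_0)$ agree, on evaluation at any object of $\S$, with the exact sequences coming from $\langle X_\bullet,\delta \rangle$ in $\C$. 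For (R2), the complexes described in the axiom have all middle terms equal to $0$ or to $A \in \S$, so they are representatives of $\s({}_A 0_0)$ and $\s({}_0 0_A)$ lying in $\textbf{C}_{(\S;A,0)}^{n+2}$ and $\textbf{C}_{(\S;0,A)}^{n+2}$, and well-definedness yields the equality for $\mathfrak{t}$.

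For (EA2) and (EA2)$^{\operatorname{op}}$, given $\delta \in \E(D,A)$ and $c \in \S(C,D)$ together with distinguished $n$-exangles $\langle X_\bullet, c^*\delta\rangle$ and $\langle Y_\bullet,\delta \rangle$ with middle terms in $\S$, the axiom (EA2) in $\C$ produces a good lift $f_\bullet$ of $(1_A,c)$. Again fullness places $f_1,\dots,f_n$ in $\S$, and the middle terms of the mapping cone $M^f_\bullet$ are the direct sums $X_{i+1}\oplus Y_i$, which lie in $\S$ since $\S$ is closed under finite direct sums as a full additive subcategory. Thus the distinguished $n$-exangle $\langle M_\bullet^f,(d_0^X)_*\delta \rangle$ has middle terms in $\S$, so $f_\bullet$ is a good lift in the restricted structure; (EA2)$^{\operatorname{op}}$ is dual. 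This establishes (1). For (2), under the additional hypothesis that (EA1) holds for $(\S,\E|_{\S^{\operatorname{op}}\times \S},\mathfrak{t})$, all the axioms of an $n$-exangulated category are satisfied.

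The only genuinely subtle step is well-definedness of $\mathfrak{t}$, where one must be careful that the homotopy equivalence class computed inside $\textbf{C}_{(\S;A,C)}^{n+2}$ really coincides with the one computed in $\textbf{C}_{(\C;A,C)}^{n+2}$ once both representatives have middle terms in $\S$; everything else is essentially bookkeeping made possible by fullness and closure under direct sums of $\S$, and the failure of (EA1) to pass automatically to subcategories is precisely why it must be assumed separately in part (2).
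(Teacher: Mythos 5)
Your argument is correct and complete. Note that the paper itself gives no proof of this proposition --- it is quoted from Herschend--Liu--Nakaoka, Proposition 2.35 --- so there is no internal argument to compare against; your direct verification (fullness of $\S$ transports lifts, homotopies and the exactness of the restricted $\operatorname{Hom}$--$\E$-sequences from $\C$ to $\S$, while additivity of $\S$ keeps the mapping cone terms $X_{i+1}\oplus Y_i$ inside the subcategory) is precisely the standard argument given in that reference, and you are right to single out well-definedness of $\mathfrak{t}$, i.e.\ the agreement of homotopy equivalence computed in $\textbf{C}_{(\S;A,C)}^{n+2}$ with that computed in $\textbf{C}_{(\C;A,C)}^{n+2}$ once all terms lie in $\S$, as the one genuinely delicate point.
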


We end this section by reviewing two results which will be needed throughout the rest of this paper. The following proposition should be well-known, but we include a proof as we lack an explicit reference. The conflations described in \cref{trivial conflations} are called \textit{trivial}.

\begin{proposition} \label{trivial conflations}
Let $\C$ be an $n$-exangulated category and $A$ an object in $\C$. Then the $(n+2)$-term sequence 
\[
0 \rightarrow \cdots \rightarrow 0 \rightarrow A \xrightarrow{1_A} A \rightarrow 0 \rightarrow \cdots \rightarrow 0
\]
which has $A$ in position $i$ and $i+1~$ for some \mbox{$i\in\{0,1,\dots,n\}$} is a conflation.
\end{proposition}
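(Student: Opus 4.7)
The plan is to proceed by cases on the position $i$. When $i = 0$ or $i = n$, the statement is immediate from axiom (R2) of an exact realization: these two extremal trivial sequences are precisely the representatives of $\s({}_A 0_0)$ and $\s({}_0 0_A)$, respectively, and are thus conflations by definition.

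For the remaining range $1 \le i \le n-1$, both endpoints of the trivial sequence $T_\bullet$ (the one with $A$'s in positions $i$ and $i+1$) are zero. The biadditivity of $\E$ gives $\E(0,0) = 0$, so the only candidate extension is ${}_0 0_0$. Applying (R2) with the zero object yields $\s({}_0 0_0) = [0_\bullet]$, the class of the zero complex. Hence it suffices to show that $T_\bullet$ and $0_\bullet$ are homotopy equivalent in $\textbf{C}_{(\C;0,0)}^{n+2}$; this gives $[T_\bullet] = [0_\bullet] = \s({}_0 0_0)$ and exhibits $T_\bullet$ as a conflation realizing the zero extension.

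I would verify this homotopy equivalence directly. Take the zero chain maps $f_\bullet \colon T_\bullet \to 0_\bullet$ and $g_\bullet \colon 0_\bullet \to T_\bullet$; both are legitimate morphisms in $\textbf{C}_{(\C;0,0)}^{n+2}$ since the fixed endpoints vanish. Their composite in one direction is already the identity on $0_\bullet$. For the other direction, define a homotopy $h$ with $h_{i+1} = 1_A \colon T_{i+1} = A \to T_i = A$ and $h_j = 0$ otherwise. A short check at positions $i$ and $i+1$ confirms that $dh + hd = \mathrm{id}_{T_\bullet}$, while at all other positions both sides are zero because $T_j = 0$.

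I do not expect a serious obstacle: the argument is essentially bookkeeping. The only conceptual point to keep in mind is that the homotopy equivalence must live in $\textbf{C}_{(\C;0,0)}^{n+2}$ rather than in $\textbf{C}_\C^{n+2}$, but since both endpoints are already zero the required identity conditions at $f_0$ and $f_{n+1}$ hold automatically, and the usual homotopy from the ambient category descends.
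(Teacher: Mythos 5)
Your proof is correct and takes essentially the same route as the paper: dispose of $i=0$ and $i=n$ via (R2), then note that the remaining trivial sequences lie in $\textbf{C}^{n+2}_{(0,0)}$ and are homotopy equivalent there to the zero complex, which is a conflation by (R2). You simply make explicit the contracting homotopy and the identification of the relevant extension with ${}_0 0_0$, details the paper leaves implicit.
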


\begin{proof} 
By (R2), the statement is true if $i=0$ or $i=n$. We can hence assume that both our end-terms are zero. Now, our sequence is homotopy equivalent in $\textbf{C}^{n+2}_{(0,0)}$ to 
\[
0 \rightarrow 0 \rightarrow \cdots \rightarrow 0 \rightarrow 0.
\]
As this sequence is a conflation, again by (R2), also the sequence we started with has to be a conflation. 
\end{proof}

As one might expect, the coproduct of two conflations is again a conflation. For a proof of this result, see \cite{Herschend-Liu-Nakaoka}*{Proposition 3.2}.

\begin{proposition} \label{coproduct of conflations}
Let $\C$ be an $n$-exangulated category and $X_\bullet$ and $Y_\bullet$ conflations in $\C$. Then also $X_\bullet \oplus Y_\bullet$ is a conflation. 
\end{proposition}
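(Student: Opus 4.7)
The plan is to construct a natural candidate extension $\sigma$ that $X_\bullet \oplus Y_\bullet$ should realize, and then verify $\s(\sigma) = [X_\bullet \oplus Y_\bullet]$ by producing a homotopy equivalence between $X_\bullet \oplus Y_\bullet$ and a representative of $\s(\sigma)$. Setting $\s(\delta) = [X_\bullet]$ with $\delta \in \E(X_{n+1}, X_0)$ and $\s(\rho) = [Y_\bullet]$ with $\rho \in \E(Y_{n+1}, Y_0)$, biadditivity of $\E$ yields a canonical decomposition
\[
\E(X_{n+1} \oplus Y_{n+1}, X_0 \oplus Y_0) \cong \E(X_{n+1}, X_0) \oplus \E(X_{n+1}, Y_0) \oplus \E(Y_{n+1}, X_0) \oplus \E(Y_{n+1}, Y_0),
\]
and I would take $\sigma$ to be the ``diagonal'' extension with components $(\delta, 0, 0, \rho)$.

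First I would verify that the canonical inclusions $\iota_{X_0} \colon X_0 \to X_0 \oplus Y_0$ and $\iota_{X_{n+1}} \colon X_{n+1} \to X_{n+1} \oplus Y_{n+1}$ assemble into a morphism of extensions $(\iota_{X_0}, \iota_{X_{n+1}}) \colon \delta \to \sigma$: both $(\iota_{X_0})_* \delta$ and $\iota_{X_{n+1}}^* \sigma$ correspond to $(\delta, 0)$ under the analogous splitting of $\E(X_{n+1}, X_0 \oplus Y_0)$, and a symmetric check handles $(\iota_{Y_0}, \iota_{Y_{n+1}}) \colon \rho \to \sigma$. Fixing a representative $Z_\bullet$ of $\s(\sigma)$, axiom (R0) supplies lifts $f_\bullet \colon X_\bullet \to Z_\bullet$ and $g_\bullet \colon Y_\bullet \to Z_\bullet$ with the prescribed inclusions as their end-term components. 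Combining them componentwise via $\phi_i = [f_i \; g_i] \colon X_i \oplus Y_i \to Z_i$ produces a chain map $\phi_\bullet \colon X_\bullet \oplus Y_\bullet \to Z_\bullet$ whose end-term components are the identities on $X_0 \oplus Y_0$ and $X_{n+1} \oplus Y_{n+1}$, placing $\phi_\bullet$ in $\textbf{C}^{n+2}_{(X_0 \oplus Y_0, X_{n+1} \oplus Y_{n+1})}$.

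The main obstacle is to show that $\phi_\bullet$ is a homotopy equivalence. My plan is to first observe that $\langle X_\bullet \oplus Y_\bullet, \sigma \rangle$ is itself an $n$-exangle, since by biadditivity of $\C(-,-)$ and $\E$ its associated pair of functor sequences splits as the direct sum of the corresponding pairs for $\langle X_\bullet, \delta \rangle$ and $\langle Y_\bullet, \rho \rangle$, both of which are exact by (R1), and direct sums of exact sequences of abelian-group-valued functors are exact. Since $\langle Z_\bullet, \sigma \rangle$ is also an $n$-exangle realizing $\sigma$ by (R1), I would appeal to a rigidity principle for $n$-exangles sharing an extension -- extractable from the analysis surrounding \cite{Herschend-Liu-Nakaoka}*{Proposition 2.16} -- which gives that any chain map between two such $n$-exangles which is the identity on both end-terms is necessarily a homotopy equivalence. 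Applied to $\phi_\bullet$, this yields $[X_\bullet \oplus Y_\bullet] = [Z_\bullet] = \s(\sigma)$, so $X_\bullet \oplus Y_\bullet$ is a conflation realizing $\sigma$. Establishing the rigidity principle, likely by a Yoneda-style argument using the exactness of both functor sequences to invert $\phi_\bullet$ up to homotopy, is the technically delicate step.
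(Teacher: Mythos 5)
The paper gives no proof of its own here, deferring entirely to \cite{Herschend-Liu-Nakaoka}*{Proposition 3.2}, and your argument is essentially the proof given there: form the direct-sum extension $\sigma=\delta\oplus\rho$, observe via biadditivity that $\langle X_\bullet\oplus Y_\bullet,\sigma\rangle$ is an $n$-exangle, and compare it with a distinguished representative of $\s(\sigma)$ through lifts of the two inclusion morphisms of extensions. The ``rigidity principle'' you defer is exactly what is established in \cite{Herschend-Liu-Nakaoka}*{Section 2.2} (a lift, between $n$-exangles over the same extension, whose end-components are isomorphisms is a homotopy equivalence), so invoking it rather than reproving it is consistent with the level of detail the paper itself adopts.
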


\section{Subcategories and $n$-(co)generators} \label{subcategories}
In this section we introduce the terminology which is needed in our main result, such as the notion of an $n$-(co)generator, complete subcategories and dense subcategories. We also define $n$-exangulated subcategories, and show that the subcategories which will appear in our classification theorem carry this structure.

\begin{definition}
Let $\C$ be an $n$-exangulated category. A full additive subcategory $\G$ of $\C$ is called an \textit{$n$-generator} (resp. \textit{$n$-cogenerator}) of $\C$ if for each object $A$ in $\C$, there exists a conflation
\begin{align*}
    & A' \rightarrow G_1 \rightarrow \cdots \rightarrow G_n \rightarrow A \\
    (\text{Resp. }~ & A \rightarrow G_1 \rightarrow \cdots \rightarrow G_n \rightarrow A')
\end{align*}
in $\C$ with $G_i$ in $\G$ for $i=1,\dots,n$.
\end{definition}

A $1$-(co)generator is often just called a (co)generator. Our notion of a (co)generator essentially coincides with what is used in \cite{Matsui} and \cite{Zhu-Zhuang}. There, however, it is not assumed that the subcategory $\G$ is additive. Note that it would be possible to prove our results also without this extra assumption, but we have chosen this convention to simplify the statement in \cref{[X]-[G]}. 

We get a trivial example of an $n$-(co)generator by choosing $\G$ to be the entire category $\C$. Another natural example arises if our category has enough projectives or injectives. Let us first recall what this means from \cite{Liu-Zhou}*{Definition 3.2}.

\begin{definition}
Let $\C$ be an $n$-exangulated category. 
\begin{enumerate}
    \item An object $P$ in $\C$ is called \textit{projective} if for any conflation
    \[
    X_0 \xrightarrow{d_0} X_1 \rightarrow \cdots \rightarrow X_n \xrightarrow{d_n} X_{n+1}
    \]
    in $\C$ and any morphism $f \colon P \rightarrow X_{n+1}$, there exists a morphism \mbox{$g \colon P \rightarrow X_n$} such that $d_n \circ g = f$.
    \item The category $\C$ \textit{has enough projectives} if for each object $A$ in $\C$, there exists a conflation 
    \[
    A' \rightarrow P_1 \rightarrow \cdots \rightarrow P_n \rightarrow A
    \]
    in $\C$ with $P_i$ projective for $i=1,\dots,n$.
    \item We define \textit{injective objects} and the notion of having \textit{enough injectives} dually. 
\end{enumerate}
\end{definition}

The notion of having enough projectives or injectives relates well to our definition of an $n$-(co)generator, as demonstrated in the example below. 

\begin{example} \label{n+2-ang. cat. gen.}
Let $\C$ be an $n$-exangulated category. If $\C$ has enough projectives, then the full subcategory $\P \subseteq \C$ of projective objects is an $n$-generator of $\C$. Dually, if $\C$ has enough injectives, the full subcategory $\I\subseteq\C$ of injective objects is an $n$-cogenerator of $\C$. In the case where $\C$ is a Frobenius $n$-exangulated category, as defined in \cite{Liu-Zhou}, the subcategory $\P = \I$ is both an $n$-generator and an $n$-cogenerator of $\C$. 
\end{example}

We will classify subcategories of an $n$-exangulated category which are dense and complete. 

\begin{definition}
Let $\C$ be an $n$-exangulated category and $\S$ a full subcategory of $\C$.
\begin{enumerate}
    \item The subcategory $\S$ is \textit{dense} in $\C$ if each object in $\C$ is a summand of an object in $\S$.
    \item The subcategory $\S$ is \textit{complete} if given any conflation in $\C$ with $n+1$ of its objects in $\S$, also the last object has to be in $\S$.
\end{enumerate}
\end{definition}

Even though it is not a part of the definition, it turns out that given reasonable conditions, complete subcategories are always additive and closed under isomorphisms. 

\begin{lemma} \label{additive subcategory}
Let $\C$ be an $n$-exangulated category. Every complete subcategory $\S$ of $\C$ which contains $0$ is additive and closed under isomorphisms.
\end{lemma}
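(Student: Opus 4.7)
My plan is to prove the two conclusions separately, both by reducing to the completeness property of $\S$ applied to a carefully chosen conflation built from \cref{trivial conflations} and \cref{coproduct of conflations}.

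For closure under isomorphism, given $A \in \S$ and an isomorphism $\phi \colon A \to B$ in $\C$, I will argue that the sequence
\[
X_\bullet \;=\; \bigl(A \xrightarrow{\phi} B \to 0 \to \cdots \to 0\bigr)
\]
is a conflation. By (R2), the trivial sequence $Y_\bullet = (A \xrightarrow{1_A} A \to 0 \to \cdots \to 0)$ realizes ${}_A 0_0$. Both $X_\bullet$ and $Y_\bullet$ lie in $\mathbf{C}_{(A,0)}^{n+2}$, and the chain maps $(1_A,\phi,0,\dots,0) \colon Y_\bullet \to X_\bullet$ and $(1_A,\phi^{-1},0,\dots,0) \colon X_\bullet \to Y_\bullet$ are mutually inverse there, so $X_\bullet$ and $Y_\bullet$ are in particular homotopy equivalent in $\mathbf{C}_{(A,0)}^{n+2}$. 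Hence $[X_\bullet] = [Y_\bullet] = \s({}_A 0_0)$, making $X_\bullet$ a conflation. Its $n+2$ entries include $A \in \S$ and $n$ copies of $0 \in \S$, giving $n+1$ entries in $\S$, so completeness forces the remaining entry $B$ into $\S$.

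For additivity, since $0 \in \S$ and hom-sets are inherited from $\C$, it suffices to show that $\S$ is closed under binary direct sums. Given $A, B \in \S$, I will apply \cref{trivial conflations} with $i = 0$ and $i = 1$ to obtain the conflations $A \xrightarrow{1_A} A \to 0 \to \cdots \to 0$ and $0 \to B \xrightarrow{1_B} B \to 0 \to \cdots \to 0$, and then invoke \cref{coproduct of conflations} to conclude that their direct sum
\[
A \to A \oplus B \to B \to 0 \to \cdots \to 0
\]
is again a conflation. In this sequence $A$, $B$ and the $n-1$ trailing zeros account for $n+1$ entries already in $\S$, so completeness yields $A \oplus B \in \S$.

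The main subtlety I anticipate is in the first step: one must check that $X_\bullet$ is a conflation in the strict sense $[X_\bullet] = \s({}_A 0_0)$ in $\mathbf{C}_{(A,0)}^{n+2}$, rather than merely being isomorphic in $\mathbf{C}_\C^{n+2}$ to one. Choosing the comparison so that both end-terms are fixed at $A$ and $0$ is exactly what makes the (R2)-conflation accessible inside $\mathbf{C}_{(A,0)}^{n+2}$, so that the homotopy equivalence class picks up the desired realization and completeness can be applied.
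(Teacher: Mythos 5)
Your proof is correct and takes essentially the same route as the paper: additivity follows from applying completeness to the coproduct of two trivial conflations $A \to A \oplus B \to B \to 0 \to \cdots \to 0$, and isomorphism-closure follows from recognizing $A \xrightarrow{\simeq} B \to 0 \to \cdots \to 0$ as a conflation because it is homotopy equivalent in $\mathbf{C}^{n+2}_{(A,0)}$ to the (R2)-conflation. Your explicit construction of the mutually inverse chain maps with fixed end-terms merely spells out what the paper summarizes as ``equivalent to a trivial conflation.''
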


\begin{proof}
Let $A$ and $B$ be objects in $\S$. By taking the coproduct of two trivial conflations, we get the conflation
\[
A \rightarrow A \oplus B \rightarrow B \rightarrow 0 \rightarrow \cdots \rightarrow 0.
\]
As $0$ is in $\S$, all objects in this sequence except the second one is in $\S$. By completeness, this means that also $A \oplus B$ is in $\S$, which shows additivity.

Given an isomorphism $A \xrightarrow{\simeq} B$ in $\C$, the $(n+2)$-term sequence
\[
A \xrightarrow{\simeq} B \rightarrow 0 \rightarrow \cdots \rightarrow 0
\]
is a conflation in $\C$, as it is equivalent to a trivial conflation. Consequently, if $A$ is in $\S$, then also $B$ has to be there, so $\S$ is closed under isomorphisms. 
\end{proof}

Notice that when a subcategory $\S$ of an $n$-exangulated category is dense, it is automatically non-empty. Whenever $n$ is odd and $\S$ is both dense and complete, our subcategory necessarily contains $0$. This can be seen by taking an object $A$ in $\S$ and using completeness with respect to the conflation
\[
A \xrightarrow{1_A} A \xrightarrow{0} A \xrightarrow{1_A} \cdots \xrightarrow{0} A \xrightarrow{1_A} A \rightarrow 0,
\]
which is a sum of trivial conflations, and in which the last object is the only one not equal to $A$. Consequently, dense and complete subcategories are always additive and isomorphism-closed when $n$ is odd, which will often be the case in our further work. We will show that a stronger statement is true, namely that every such subcategory is actually an \textit{$n$-exangulated subcategory} of the ambient category. The key requirement of an $n$-exangulated subcategory is that the inclusion is an $n$-exangulated functor, as introduced in \cite{BT-Shah}*{Definition 2.31}.

\begin{definition}
Let $(\C_1,\E_1,\s_1)$ and $(\C_2,\E_2,\s_2)$ be $n$-exangulated categories. An additive functor $F \colon \C_1 \longrightarrow \C_2$ is an \textit{$n$-exangulated functor} if there is a natural transformation $\eta \colon \E_1 \longrightarrow \E_2(F^{\operatorname{op}}-, F-)$ such that if $\s_1(\delta) =[X_\bullet]$ for some $\delta \in \E_1(C,A)$, then $\s_2(_{\scriptscriptstyle A}\eta_{\scriptscriptstyle C}(\delta)) = [FX_\bullet]$.
\end{definition}

Notice that the notation $_{\scriptscriptstyle A}\eta_{\scriptscriptstyle C}$ is used for the group homomorphism 
\[
_{\scriptscriptstyle A}\eta_{\scriptscriptstyle C} \colon \E_1(C,A) \longrightarrow \E_2(FC,FA) = \E_2(F^{\operatorname{op}}C,FA)
\]
given by the natural transformation $\eta$. We call $\eta$ an \textit{inclusion} if $_{\scriptscriptstyle A}\eta_{\scriptscriptstyle C}$ is an inclusion of abelian groups for every pair of objects $A$ and $C$.

We are now ready to give the definition of an $n$-exangulated subcategory. 
\begin{definition} \label{def. subcat.}
Let $(\C,\E,\s)$ be an $n$-exangulated category. An \textit{$n$-exangulated subcategory} of $\C$ is a full isomorphism-closed subcategory $\S$ which carries an $n$-exangulated structure $(\S,\E',\s')$ for which the inclusion functor is $n$-exangulated and the associated natural transformation is an inclusion.
\end{definition}

Our definition emphasizes that an $n$-exangulated subcategory \textit{inherits} the structure of the ambient category. In particular, the biadditive functor $\E'$ is an additive subfunctor of the restricted functor $\E|_{\S^{\operatorname{op}} \times \S}$ in the sense of \cite{Herschend-Liu-Nakaoka}*{Definition 3.6}. The exact realizations $\s$ and $\s'$ agree, meaning that if $\s'(\delta) =[X_\bullet]$ for some $\delta \in \E'(C,A) \subseteq \E(C,A)$, then $\s(\delta) = [X_\bullet]$. Notice that the first equivalence class is taken in $\textbf{C}_{(\S;A,C)}^{n+2}$, while the second is taken in $\textbf{C}_{(\C;A,C)}^{n+2}$. In the case $n=1$, the subcategories defined above should be called \textit{extriangulated subcategories}. 

For our applications in \cref{classification}, the most important class of examples of $n$-exangulated subcategories will arise from extension-closed subcategories. In this case we have $\E'=\E|_{\S^{\operatorname{op}} \times \S}$. We also give a basic example where $\E'$ is a proper subfunctor.

\begin{example} \label{example}
(1) Let $\S$ be an extension-closed subcategory of an $n$-exangulated category $(\C,\E,\s)$ and define $\mathfrak{t}$ as explained in \cref{extension-closed}. If the triplet $(\S,\E|_{\S^{\operatorname{op}} \times \S},\mathfrak{t})$ satisfies (EA1), then $\S$ is an $n$-exangulated subcategory of $\C$. Notice that the natural transformation $\eta$ associated to the inclusion functor is given by $_{\scriptscriptstyle A}\eta_{\scriptscriptstyle C} = 1_{\E(C,A)}$ for objects $A$ and $C$ in $\S$.

(2) Let $\C = \Ab$ be the category of abelian groups. This is an extriangulated category with biadditive functor $\E = \operatorname{Ext}_{\C}^1$. Let $\S \subseteq \C$ denote the subcategory of semisimple objects. Using that $\S$ is closed under kernels and cokernels, one can check that $\S$ is an abelian subcategory of $\C$. Consequently, one obtains that $\S$ is an extriangulated subcategory with biadditive functor $\E'=\operatorname{Ext}_{\S}^1$. As $\S$ is not extension-closed in $\C$, we can see that $\E'$ is a proper subfunctor of $\E|_{\S^{\operatorname{op}} \times \S}$.
\end{example}

Let us finish this section by showing that if $n$ is odd, every dense and complete subcategory of an $n$-exangulated category is an $n$-exangulated subcategory.

\begin{proposition} \label{dense + complete = n-exangulated}
Let $(\C,\E,\s)$ be an $n$-exangulated category with $n$ odd and $\S$ a dense and complete subcategory of $\C$. The following statements hold: 
\begin{enumerate}
    \item The subcategory $\S$ is extension-closed. 
    \item The triplet $(\S,\E|_{\S^{\operatorname{op}} \times \S},\mathfrak{t})$, with $\mathfrak{t}$ as defined in \cref{extension-closed}, is an $n$-exangulated subcategory of $\C$.
\end{enumerate}
\end{proposition}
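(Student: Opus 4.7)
For part (1), the plan is to begin with any distinguished $n$-exangle $\langle X_\bullet,\delta\rangle$ realizing the given extension $\delta\in\E(C,A)$ and to pad its middle terms into $\S$ by taking a direct sum with suitable trivial conflations. Concretely, by \cref{trivial conflations}, for each $i=1,\dots,n-1$ and any object $U_i\in\C$, the sequence $T^{(i)}$ with $U_i\xrightarrow{1_{U_i}}U_i$ in positions $i$ and $i+1$ (and zeros elsewhere) is a conflation, so by \cref{coproduct of conflations} the direct sum $X_\bullet\oplus T^{(1)}\oplus\cdots\oplus T^{(n-1)}$ is again a conflation. Its end terms remain $A$ and $C$, since every $T^{(i)}$ vanishes in positions $0$ and $n+1$, and it continues to realize $\delta$ under the canonical identification $\E(C,A)\cong\E(C\oplus 0,A\oplus 0)$. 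The object in position $j\in\{1,\dots,n-1\}$ of the sum is $X_j\oplus U_{j-1}\oplus U_j$ with the convention $U_0=0$, and density lets me choose $U_1,U_2,\dots,U_{n-1}$ inductively so that each such term lies in $\S$. Together with the endpoints $A$ and $C$, this places $n+1$ of the $n+2$ terms of the padded conflation in $\S$, and completeness then forces the remaining position $n$ into $\S$, giving the desired distinguished $n$-exangle.

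For part (2), combining (1) with \cref{extension-closed} supplies the triplet $(\S,\E|_{\S^{\operatorname{op}}\times\S},\mathfrak{t})$ satisfying every axiom of an $n$-exangulated category except possibly (EA1). To handle (EA1), let $f\colon A\to A'$ and $g\colon A'\to A''$ be $\S$-inflations, hence $\C$-inflations; (EA1) for $\C$ produces a conflation $A\xrightarrow{gf}A''\to Z_2\to\cdots\to Z_{n+1}$ in $\C$. The plan is to apply the same padding trick as before, now using trivial conflations $T^{(i)}$ only for $i=2,\dots,n$, so that positions $0$ and $1$, and thus the first map $gf$, remain untouched. Density allows an inductive choice of auxiliary objects making positions $2,\dots,n$ of the sum lie in $\S$, which together with $A$ and $A''$ supplies $n+1$ terms of the padded conflation in $\S$; completeness then forces the last term into $\S$, exhibiting $gf$ as an $\S$-inflation. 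The dual argument, using (EA1) for deflations in $\C$ and trivial conflations supported in positions $0,\dots,n-1$, handles the closure of $\S$-deflations under composition.

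The hypothesis that $n$ is odd enters only as in the remark preceding \cref{def. subcat.}: it guarantees $0\in\S$, and \cref{additive subcategory} then gives that $\S$ is isomorphism-closed. Since the inherited biadditive functor is literally $\E|_{\S^{\operatorname{op}}\times\S}$, the natural transformation associated to the inclusion functor is the identity, trivially an inclusion of abelian groups in the sense of \cref{def. subcat.}. The main obstacle is more conceptual than technical: one must recognize the padding-with-trivial-conflations construction and the trick of arranging for exactly one position to be supplied by completeness rather than by density, so that the middle terms can be placed in $\S$ without disturbing either the realization of $\delta$ in part (1) or the composite morphism $gf$ in part (2).
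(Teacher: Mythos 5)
Your proof is correct and follows essentially the same strategy as the paper: pad the middle terms into $\S$ by summing with trivial conflations chosen via density, reserve exactly one position to be supplied by completeness, reduce part \textit{(2)} to axiom (EA1) via \cref{extension-closed} and \cref{example}, and use the oddness of $n$ only to ensure $0\in\S$ so that \cref{additive subcategory} applies. The one difference is purely bookkeeping: the paper fixes auxiliary objects $X_i'$ with $X_i\oplus X_i'\in\S$ all at once so that each padded position is a direct sum of such objects, whereas you choose the $U_i$ inductively so that each position $X_j\oplus U_{j-1}\oplus U_j$ lands in $\S$; both schemes work.
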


\begin{proof}
As $n$ is odd, it follows from \cref{additive subcategory} that the subcategory $\S$ is additive and isomorphism-closed. 

Let $A$ and $C$ be objects in $\S$ and consider an extension $\delta \in \E(C,A)$. As $\C$ is $n$-exangulated, there is a distinguished $n$-exangle $\langle X_\bullet,\delta \rangle$ in $\C$ with $X_\bullet$ given by
\[
    A \rightarrow X_1 \rightarrow \cdots \rightarrow X_n \rightarrow C.
\]
The objects $X_i$ are not necessarily contained in $\S$, but we will show that we can pick another representative of the equivalence class $[X_\bullet]$ for which this is satisfied. 

For $i=1,\dots,n-1$, use that $\S$ is dense and let $X_i'$ be an object such that $X_i \oplus X_i'$ is in $\S$. By adding trivial conflations involving the objects $X_i$ and $X_i'$ to the conflation above, we get a new conflation
\[
A \rightarrow X_1\oplus X_1' \rightarrow \cdots \rightarrow \bigoplus_{i=1}^{n-1}(X_i \oplus X_i') \rightarrow \widebar{X} \rightarrow C,
\]
where $\widebar{X}= X_1 \oplus X_2' \oplus X_3 \oplus \cdots \oplus X_{n-1}' \oplus X_n$. Notice that each of the trivial conflations we have added are equivalent to the zero conflation, i.e.\ the conflation given by the $(n+2)$-term sequence where every object is zero. Hence, our new conflation represents the same equivalence class as the one we started with.

It remains to observe that every object in our new conflation except possibly $\widebar{X}$ is contained in $\S$. As $\S$ is complete, this means that also $\widebar{X}$ is in $\S$, which proves \textit{(1)}.

For \textit{(2)}, notice that by \cref{extension-closed} and \cref{example} it is enough to verify that (EA1) is satisfied. Let $f$ and $g$ be two composable inflations in $\S$. By the definition of $\mathfrak{t}$, inflations in $\S$ are also inflations in $\C$. As $\C$ satisfies (EA1), there is a conflation
\[
    X_0 \xrightarrow{f \circ g} X_1 \rightarrow \cdots \rightarrow X_n \rightarrow X_{n+1}
\]
in $\C$. By assumption, we know that $X_0$ and $X_1$ are in $\S$, but the same is not necessarily true for the last $n$ objects. However, we apply a similar technique as above to get a conflation where all the objects are in $\S$. For $i = 2,\dots,n$, let $X_i'$ be an object such that $X_i \oplus X_i'$ is in $\S$. Adding trivial conflations to the conflation above yields the conflation
\[
X_0 \xrightarrow{f \circ g} X_1 \rightarrow X_2 \oplus X_2' \rightarrow \cdots \rightarrow \bigoplus_{i=2}^{n}(X_i \oplus X_i') \rightarrow \widebar{X},
\]
where $\widebar{X}$ now denotes the object $X_2 \oplus X_3' \oplus X_4 \oplus \cdots \oplus X_n' \oplus X_{n+1}$. As the first $n+1$ objects in this conflation are in $\S$, so is $\widebar{X}$. Consequently, this is a conflation in $\S$, which shows that $f\circ g$ is an inflation in $\S$. A dual argument shows that the class of deflations in $\S$ is closed under composition.
\end{proof}

\section{The Grothendieck group of an $n$-exangulated category}
Throughout the rest of this paper, we let $\C$ be an essentially small category. Hence, the collection of isomorphism classes $\langle A\rangle$ of objects $A$ in $\C$ forms a set, and we can consider the free abelian group $\F(\C)$ generated by such isomorphism classes. We will define the Grothendieck group of an $n$-exangulated category $\C$ to be a certain quotient of this free abelian group. More precisely, we want to factor out the Euler relations coming from conflations. Given a conflation 
\[
X_\bullet \colon X_0 \rightarrow X_1 \rightarrow \cdots \rightarrow X_n \rightarrow X_{n+1}
\]
in $\C$, the corresponding Euler relation is the alternating sum of isomorphism classes
\[
\chi(X_\bullet)= \langle X_0 \rangle - \langle X_1 \rangle + \cdots + (-1)^{n+1}\langle X_{n+1} \rangle.
\]

\begin{definition}
Let $\C$ be an $n$-exangulated category.
The \textit{Grothendieck group} of $\C$ is the quotient $K_0(\C)=\F(\C)/\R(\C)$, where $\R(\C)$ is the subgroup generated by the subset
\begin{align*}
&\{ \chi(X_\bullet) \mid X_\bullet ~~\text{is a conflation in}~~ \C \} ~~\text{if $n$ is odd and} \\
\{\langle 0 \rangle \}\cup&\{ \chi(X_\bullet) \mid X_\bullet ~~\text{is a conflation in}~~ \C \} ~~\text{if $n$ is even.}
\end{align*}
We denote the equivalence class $\langle A \rangle + \R(\C)$ represented by an object \mbox{$A$ in $\C$} by $[A]$.
\end{definition}

It is immediate from the definition that the Grothendieck group $K_0(\C)$ has a universal property. Namely, any homomorphism of abelian groups from $\F(\C)$ satisfying the Euler relations factors uniquely through $K_0(\C)$. More precisely, given any abelian group $T$ and a homomorphism $t \colon \F(\C) \rightarrow T$ with $t(\R(\C))=0$, there exists a unique homomorphism $t'$ such that the following diagram commutes
\[
\begin{tikzcd}[column sep=11]
\F(\C) \arrow[r,"\pi"] \arrow[rd,"t"] & K_0(\C) \arrow[d,dotted,"t'"] \\
& T,
\end{tikzcd}
\]
where $\pi$ is the natural projection.

Let us prove some basic properties of the Grothendieck group of an $n$-exangulated category. These properties are well-known in the cases where our category is triangulated or exact. Note that $\langle 0 \rangle$ was defined to be in $\R(\C)$ whenever $n$ is even in order for the following proposition to hold.

\begin{proposition} \label{grunnleggende K_0}
Let $\C$ be an $n$-exangulated category.
\begin{enumerate}
    \item The zero element in $K_0(\C)$ is given by $[0]$, where $0$ is the zero object in $\C$.
    \item For objects $A$ and $B$ in $\C$, we have $[A\oplus B]=[A]+[B]$ in $K_0(\C)$.
\end{enumerate}
\end{proposition}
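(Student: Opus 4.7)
The plan is to establish (1) first, since it is the main ingredient in the proof of (2). For (1), when $n$ is even there is nothing to show: $\langle 0\rangle$ is explicitly included in the generators of $\R(\C)$, so $[0]=0$ by definition. When $n$ is odd, I would apply axiom (R2) with $A=0$, or equivalently \cref{trivial conflations} with $A=0$, to conclude that the all-zero $(n+2)$-term sequence is a distinguished $n$-exangle and hence a conflation. Its Euler relation is
\[
\chi=\Bigl(\sum_{i=0}^{n+1}(-1)^i\Bigr)\langle 0\rangle=\langle 0\rangle,
\]
where the alternating sum evaluates to $1$ precisely because $n+2$ is odd. Thus $\langle 0\rangle\in\R(\C)$ and $[0]=0$ in $K_0(\C)$ in both parities.

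For (2), the strategy is to exhibit a single conflation whose Euler relation encodes the identity $[A\oplus B]-[A]-[B]$ modulo $[0]$. Using \cref{trivial conflations}, the sequence $A\xrightarrow{1_A}A\to 0\to\cdots\to 0$, with $A$ in positions $0,1$, and the sequence $0\to B\xrightarrow{1_B}B\to 0\to\cdots\to 0$, with $B$ in positions $1,2$, are both conflations. By \cref{coproduct of conflations}, their direct sum is again a conflation, and a direct computation shows it has the form
\[
A \to A\oplus B \to B \to 0 \to \cdots \to 0.
\]
The associated Euler relation then lies in $\R(\C)$, and after projecting to $K_0(\C)$ it reads
\[
[A]-[A\oplus B]+[B]+\sum_{i=3}^{n+1}(-1)^i[0]=0.
\]
Invoking part (1) to eliminate the $[0]$ summands yields $[A\oplus B]=[A]+[B]$, as desired.

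There is no substantial obstacle here; the only subtle point is selecting the two trivial conflations in (2) with the identity morphism placed at exactly the right consecutive positions, so that their direct sum yields a conflation with $A$, $A\oplus B$, $B$ in the first three slots and zeros thereafter. This positional flexibility is precisely what \cref{trivial conflations} provides, which is why stating that proposition for arbitrary $i\in\{0,1,\dots,n\}$ rather than only for the two endpoints matters.
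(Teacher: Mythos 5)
Your proposal is correct and follows essentially the same route as the paper: part (1) via the all-zero conflation guaranteed by (R2) together with the parity observation that $\sum_{i=0}^{n+1}(-1)^i=1$ when $n$ is odd, and part (2) via the conflation $A\to A\oplus B\to B\to 0\to\cdots\to 0$ obtained as a direct sum of two trivial conflations. The only difference is that you spell out explicitly which two trivial conflations are being summed, which the paper leaves implicit.
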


\begin{proof}
If $n$ is even, the definition of $\R(\C)$ immediately implies that $[0]$ is the zero element in $K_0(\C)$. 

Recall that the $(n+2)$-term sequence 
\[
0 \rightarrow 0 \rightarrow \cdots \rightarrow 0 \rightarrow 0
\]
is a conflation in $\C$ by (R2). Consequently, the sum $\sum_{i=0}^{n+1}(-1)^i \langle 0 \rangle$ is in $\R(\C)$. If $n$ is odd, this sum is equal to $\langle 0 \rangle$, and hence $[0]$ is the zero element in $K_0(\C)$ also in this case. This shows \textit{(1)}.

For \textit{(2)}, consider the sequence 
\[
A \rightarrow A \oplus B \rightarrow B \rightarrow 0 \rightarrow \cdots \rightarrow 0
\]
with $n+2$ terms. This sequence is a conflation in $\C$ as it is a sum of two trivial conflations. Using \textit{(1)}, this implies that
\[
\langle A \rangle - \langle A \oplus B \rangle + \langle B \rangle \in \R(\C),
\]
which yields $[A \oplus B] = [A] + [B]$ in $K_0(\C)$.
\end{proof}

Notice that any element in $K_0(\C)$ can be written as $[A]-[B]$ for some objects $A$ and $B$ in $\C$, as we can collect positive and negative terms and then use the second part of the proposition above. In the case where $n$ is odd and our category has an $n$-(co)generator, we get an even nicer description.

\begin{proposition} \label{[X]-[G]}
Let $\C$ be an $n$-exangulated category with $n$ odd. Let $\G$ be an $n$-(co)generator of $\C$. Then every element in $K_0(\C)$ can be written as $[A]-[G]$ for some objects $A$ in $\C$ and $G$ in $\G$.
\end{proposition}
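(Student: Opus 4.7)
The plan is to start from the observation already noted immediately before the proposition: any element of $K_0(\C)$ can be written as $[X]-[Y]$ for suitable objects $X, Y \in \C$, by collecting positive and negative terms using part (2) of \cref{grunnleggende K_0}. The goal is then to replace the single negative term $[Y]$ by $[G]$ for some $G \in \G$, at the cost of modifying the positive part.

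Suppose first that $\G$ is an $n$-generator. Apply the defining property to $Y$ to obtain a conflation
\[
Y' \rightarrow G_1 \rightarrow \cdots \rightarrow G_n \rightarrow Y
\]
with $G_i \in \G$ for $i=1,\dots,n$. The corresponding Euler relation $\chi(X_\bullet) \in \R(\C)$ gives, using $n$ odd (so $(-1)^n = -1$ and $(-1)^{n+1}=1$), the identity
\[
[Y'] - [G_1] + [G_2] - [G_3] + \cdots - [G_n] + [Y] = 0
\]
in $K_0(\C)$. Solving for $-[Y]$ and adding $[X]$ yields
\[
[X] - [Y] = \bigl([X] + [Y'] + [G_2] + [G_4] + \cdots + [G_{n-1}]\bigr) - \bigl([G_1] + [G_3] + \cdots + [G_n]\bigr).
\]
The crucial point is that, because $n$ is odd, all objects carrying a negative sign on the right-hand side, namely $G_1, G_3, \dots, G_n$, lie in $\G$. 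Since $\G$ is an additive subcategory of $\C$, the direct sum $G := G_1 \oplus G_3 \oplus \cdots \oplus G_n$ again lies in $\G$, and by part (2) of \cref{grunnleggende K_0} we have $[G] = [G_1] + [G_3] + \cdots + [G_n]$. Collecting the positive terms into $A := X \oplus Y' \oplus G_2 \oplus G_4 \oplus \cdots \oplus G_{n-1}$, the same proposition gives $[X] - [Y] = [A] - [G]$, as desired.

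The case where $\G$ is an $n$-cogenerator is handled by a dual argument: apply the cogenerator property to $Y$ to produce a conflation $Y \to G_1 \to \cdots \to G_n \to Y'$ with $G_i \in \G$, read off the Euler relation, and collect terms in exactly the same fashion. Again, $n$ being odd ensures that the objects $G_1, G_3, \dots, G_n$ end up on the negative side and can be replaced by a single element of $\G$ via additivity of $\G$.

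There is no real obstacle here, only bookkeeping; the one point requiring care is the parity argument, as the proposition is false in the even case precisely because the resolution objects would then not all collect on a single side of the equation.
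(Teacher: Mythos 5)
Your proposal is correct and follows essentially the same route as the paper's proof: resolve the negative term via the $n$-(co)generator, read off the Euler relation, use the parity of $n$ to see that all $\G$-objects with a negative sign collect on one side, and absorb everything into two direct sums using additivity of $\G$ and \cref{grunnleggende K_0}. No gaps.
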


\begin{proof}
Given an element in $K_0(\C)$, we know that it can be written as \mbox{$[X]-[B]$} for some objects $X$ and $B$ in $\C$. When $\G$ is an $n$-generator, there exists a conflation
\[
B' \rightarrow G_1 \rightarrow \cdots \rightarrow G_n \rightarrow B
\]
in $\C$ with $G_i$ in $\G$ for $i=1,\dots,n$. Consequently, using that $n$ is odd, we get
\begin{equation} \tag{$\ast$}
    [B] = -[B'] + [G_1] - [G_2] + \cdots - [G_{n-1}] +[G_n]
\end{equation}
in $K_0(\C)$. Substituting this expression for $[B]$, the element we started with can be written as
\begin{align*}
[X] - [B] &= [X] + [B'] - [G_1] + [G_2] - \cdots + [G_{n-1}] - [G_n] \\
&= [X \oplus B' \oplus G_2 \oplus G_4 \oplus \cdots \oplus G_{n-1}] - [G_1 \oplus G_3 \oplus \cdots \oplus G_n],           
\end{align*}
where we have collected positive and negative terms from the alternating sum and used \cref{grunnleggende K_0}. Defining $A$ and $G$ to be the objects in the first and second bracket respectively, we get that our element can be written as $[A]-[G]$. Note that as $\G$ is additive, the object $G$ is contained in $\G$.

The proof in the case where $\G$ is an $n$-cogenerator is dual.
\end{proof}

Note that it was important in the argument above that $n$ was assumed to be odd. If $n$ was even, there would be no negative sign in front of the term $[B']$ in the expression ($\ast$). Hence, the signs of $[X]$ and $[B']$ in our final equation would be different, and we would not reach our conclusion.

The description of elements in the Grothendieck group which is provided in \cref{[X]-[G]} will be important in our further work. In the following, we will thus often need to assume that $n$ is odd.

\begin{remark}
\cref{[X]-[G]} is an $n$-exangulated analogue of a result from \cite{Matsui} concerning exact categories, which can be found in the proof of Lemma 2.8. As an $(n+2)$-angulated category has $\G=\{0\}$ as an $n$-(co)generator, \cref{[X]-[G]} can also be thought of as a generalization of part \textit{(3)} of \cite{Bergh-Thaule}*{Proposition 2.2}.
\end{remark}

\section{Classification of subcategories}
\label{classification}

Recall that $\C$ is assumed to be essentially small. In this section we state and prove our main result. For $n$ odd we classify dense complete subcategories of an $n$-exangulated category with an $n$-(co)generator $\G$ in terms of subgroups of the Grothendieck group. The subgroups which appear in the bijection, depend on the $n$-(co)generator. More precisely, the subgroups have to contain 
\[
H_{\G} = \langle [G] \in K_0(\C) \mid G \in \G \rangle \leq K_0(\C),
\]
i.e.\ the subgroup of $K_0(\C)$ generated by elements represented by objects \mbox{in $\G$}. When a subgroup of $K_0(\C)$ contains $H_\G$, we say that it contains the image of $\G$.

\begin{theorem} \label{main theorem}
Let $\C$ be an $n$-exangulated category with $n$ odd. Let $\G$ be an $n$-(co)generator of $\C$. There is then a one-to-one correspondence
\[
\left\{
\begin{tabular}{@{}l@{}}
    subgroups of $K_0(\C)$ \\
    containing $H_{\G}$ 
\end{tabular}
\right\}
\xrightleftarrows{f}{g}
\left\{
\begin{tabular}{@{}l@{}}
    dense complete subcategories \\
    of $\C$ containing $\G$ 
\end{tabular}
\right\},
\]
where $f(H)$ is the full subcategory
\[
f(H) = \{A \in \C \mid [A] \in H\} \subseteq \C, 
\]
and $g(\S)$ is the subgroup
\[
     g(\S) = \langle [A] \in K_0(\C) \mid A \in \S \rangle \leq K_0(\C).
\]
\end{theorem}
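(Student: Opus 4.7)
My plan is to establish the bijection in four steps: verify $f(H)$ is a dense complete subcategory containing $\G$; verify $g(\S)$ is a subgroup containing $H_\G$; prove $g \circ f = \operatorname{id}$; and prove $f \circ g = \operatorname{id}$. I expect the last step to be the main obstacle.

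First I would check well-definedness. That $H_\G \subseteq g(\S)$ is immediate from $\G \subseteq \S$. For $f(H)$: containment of $\G$ follows from $[G] \in H_\G \subseteq H$; completeness is direct from the Euler relation $\chi(X_\bullet)=0$, since if $n+1$ of the $n+2$ classes $\langle X_i \rangle$ lie in $H$ then so does the last. For density, given $A\in\C$, I apply the $n$-generator $\G$ to obtain a conflation $A' \rightarrow G_1 \rightarrow \cdots \rightarrow G_n \rightarrow A$ with $G_i \in \G$, and rearrange the Euler relation (using $n$ odd) to
\[
[A \oplus A' \oplus G_2 \oplus G_4 \oplus \cdots \oplus G_{n-1}] = [G_1 \oplus G_3 \oplus \cdots \oplus G_n] \in H_\G \subseteq H,
\]
so $A$ is a summand of an object in $f(H)$; the $n$-cogenerator case is dual.

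For $g(f(H)) = H$, the inclusion $\subseteq$ is clear. For $\supseteq$, by \cref{[X]-[G]} any $h \in H$ can be written $h = [A] - [G]$ with $A\in\C$, $G\in\G$. Then $[A] = h + [G] \in H$, so $A\in f(H)$, and hence $h = [A] - [G] \in g(f(H))$.

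Finally, for $f(g(\S)) = \S$, only the inclusion $f(g(\S)) \subseteq \S$ requires work. Given $A\in\C$ with $[A] \in g(\S)$, I want $A\in\S$. Since $n$ is odd, $\S$ contains $0$ and is additive by \cref{additive subcategory}, so $g(\S) = \{[S_1] - [S_2] \mid S_1, S_2 \in \S\}$; thus $[A \oplus S_2] = [S_1]$ in $K_0(\C)$ for some $S_1, S_2 \in \S$. My strategy is to construct a single conflation with $A$ in one end position and the other $n+1$ terms in $\S$, so that completeness forces $A \in \S$. Starting from an $n$-generator conflation $A_0 \rightarrow G_1 \rightarrow \cdots \rightarrow G_n \rightarrow A$ (its $G_i$ are already in $\S$), I would take coproducts (\cref{coproduct of conflations}) with trivial conflations (\cref{trivial conflations}) involving $S_1$, $S_2$, and density-witnesses for $A_0$, using the identity $[A \oplus S_2] = [S_1]$ and the alternating Euler signs to engineer that exactly $n+1$ terms of the resulting conflation lie in $\S$. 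The main obstacle is to choose these trivial-conflation additions so that $A$ remains at an end while the other terms are forced into $\S$; I expect this to be where the $n$-(co)generator hypothesis and the parity of $n$ are both essential, mirroring the role of generators in Matsui's exact-category proof and the parity restriction of Bergh--Thaule.
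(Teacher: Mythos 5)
Your first three steps (well-definedness of $f$ and $g$, and $gf(H)=H$ via \cref{[X]-[G]}) are correct and essentially identical to the paper's argument. The gap is in the final step, $fg(\S)\subseteq\S$, which you rightly flag as the main obstacle but whose proposed resolution does not go through. From $[A]\in g(\S)$ you correctly extract $[A\oplus S_2]=[S_1]$ in $K_0(\C)$ with $S_1,S_2\in\S$; but this equality only means that $\langle A\oplus S_2\rangle-\langle S_1\rangle$ lies in the relation subgroup $\R(\C)$, i.e.\ it is a finite $\Z$-linear combination $\sum_j m_j\,\chi(X_\bullet^{(j)})$ of Euler relations of \emph{several unrelated conflations} with arbitrary integer coefficients. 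Your plan --- to take the coproduct of one $n$-generator conflation for $A$ with trivial conflations so as to land $A$ at an end with the other $n+1$ terms in $\S$ --- only lets you modify a single conflation by split summands; it gives no mechanism for absorbing the auxiliary conflations $X_\bullet^{(j)}$ and their multiplicities $m_j$ into one conflation to which completeness can be applied. This is precisely the step where a direct attack fails.

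The paper (following Thomason, Bergh--Thaule and Matsui) resolves this with an auxiliary construction you would need to supply: define $G_\S$ as the set of isomorphism classes modulo the relation $\langle A\rangle\sim\langle B\rangle$ iff $A\oplus S_A\simeq B\oplus S_B$ for some $S_A,S_B\in\S$, show it is an abelian group under $\oplus$ (inverses exist by density), and show that the map $\phi\colon K_0(\C)\to G_\S$, $[A]\mapsto\{A\}$, is well defined --- this is where the add-trivial-conflations trick together with completeness of $\S$ is actually used, to check that every Euler relation $\chi(X_\bullet)$ dies in $G_\S$. One then identifies $\Ker\phi=g(\S)$ using \cref{[X]-[G]} and the hypothesis $\G\subseteq\S$, and concludes via \cref{first lemma} that $[A]\in g(\S)$ forces $\{A\}=\{0\}$, i.e.\ $A\oplus S\simeq S'$ with $S,S'\in\S$, whence $A\in\S$ by completeness applied to the split conflation $A\to A\oplus S\to S\to 0\to\cdots\to 0$. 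Without this (or an equivalent) detour through $G_\S$, the implication $[A]\in g(\S)\Rightarrow A\in\S$ is not established.
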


\begin{remark}
The subcategories in our bijection above are $n$-exangulated subcategories of $\C$, where the $n$-exangulated structure is inherited from that of $\C$ as described in \cref{dense + complete = n-exangulated}.
\end{remark}

\begin{proof}[Proof of Theorem 5.1]
We prove the theorem in the case where $\G$ is an $n$-generator. The proof when $\G$ is an $n$-cogenerator is dual. 

Throughout the rest of the proof, let $\S$ be a dense complete subcategory of $\C$ containing $\G$ and $H$ a subgroup of $K_0(\C)$ containing $H_\G$. Let us first verify that the maps $f$ and $g$ actually end up where we claim. 

Note that $g(\S)$ is a subgroup of $K_0(\C)$ by definition. As $\S$ contains $\G$, the subgroup $H_\G$ is contained in $g(\S)$. Similarly, it is clear that $\G \subseteq f(H)$. To see that $f(H)$ is a dense subcategory, let $A$ be an object in $\C$. As $\G$ is an $n$-generator, there is a conflation
\[
A' \rightarrow G_1 \rightarrow \cdots \rightarrow G_n \rightarrow A
\]
in $\C$ with $G_i$ in $\G$ for $i=1,\dots,n$. Using that $n$ is odd, which implies that the signs in front of $[A]$ and $[A']$ in the corresponding Euler relation agree, we get 
\[
[A \oplus A'] = [G_1] - [G_2] + \cdots - [G_{n-1}] + [G_n] \in H.
\] 
This means that $A \oplus A'$ is in $f(H)$, so the subcategory is dense in $\C$. To show completeness, consider a conflation
\[
X_0 \rightarrow X_1 \rightarrow \cdots \rightarrow X_n \rightarrow X_{n+1}
\]
in $\C$, where $n+1$ of the $n+2$ objects are in $f(H)$. Since 
\[
[X_0] - [X_1] + \cdots + (-1)^{n+1}[X_{n+1}] = 0 \in H,
\]
and $n+1$ of the terms in this sum are in $H$, also the last term has to be there. This means that also the last object of our conflation above is in $f(H)$, so $f(H)$ is complete.

Our next step is to show that $f$ and $g$ are inverse bijections. The inclusion $gf(H)\subseteq H$ is immediate. For the reverse inclusion, choose an element in $H$. By \cref{[X]-[G]}, our element can be written as $[A]-[G]$ for some $A$ in $\C$ and $G$ in $\G$. As $[A] = ([A] - [G]) + [G]$, and both $[A]-[G]$ and $[G]$ are in $H$, so is $[A]$. Hence, our element is contained in $gf(H)$, and we can conclude that $H=gf(H)$. 

It remains to show that $\S = fg(\S)$. Again, one of the inclusions is clear from the definitions, namely $\S \subseteq fg(\S)$. For the reverse inclusion, choose an object $A$ in $fg(\S)$. This means that $[A]$ is in $g(\S)$. By \cref{necessary lemma} below, our object $A$ is consequently in $\S$, which completes our proof.
\end{proof}

We will prove \cref{necessary lemma} by showing that the quotient $K_0(\C) / g(\S)$ is isomorphic to another group $G_\S$ consisting of equivalence classes. 

Given an $n$-exangulated category $\C$ with $n$ odd and a dense complete subcategory $\S$ of $\C$, define a relation $\sim$ on the set of isomorphism classes of objects in $\C$ by $\langle A \rangle \sim \langle B \rangle$ if and only if there exist objects $S_A$ and $S_B$ in $\S$ such that $A \oplus S_A \simeq B \oplus S_B$. One can check that this is an equivalence relation. Denote by $G_\S$ the quotient of the set of isomorphism classes of objects in $\C$ by the relation $\sim$. Elements in $G_\S$ are denoted by $\{A\}$.

\begin{lemma} \label{first lemma}
Let $\C$ be an $n$-exangulated category with $n$ odd and $\S$ a dense complete subcategory of $\C$. An object $A$ in $\C$ is contained in $\S$ if and only if $\{A\} = \{0\}$ in $G_\S$.
\end{lemma}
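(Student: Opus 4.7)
The plan is to prove the two implications separately, with the forward direction being essentially immediate and the reverse direction relying on a conflation built from trivial conflations together with the completeness of $\S$.

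For the forward direction, suppose $A \in \S$. Recall from the discussion after \cref{additive subcategory} that because $n$ is odd and $\S$ is dense and complete, the subcategory $\S$ contains $0$. Then setting $S_A = 0$ and $S_0 = A$, both of which lie in $\S$, we have $A \oplus S_A \simeq A = 0 \oplus S_0$, which witnesses $\{A\} = \{0\}$ in $G_\S$.

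For the reverse direction, suppose $\{A\} = \{0\}$, so there exist $S_A, S_0 \in \S$ with $A \oplus S_A \simeq S_0$. By \cref{additive subcategory}, $\S$ is closed under isomorphisms, so $A \oplus S_A \in \S$. The key idea is now to exhibit a conflation with $A$ as its only term that may a priori fail to lie in $\S$. Combining the trivial conflations
\[
A \xrightarrow{1_A} A \to 0 \to \cdots \to 0 \qquad \text{and} \qquad 0 \to S_A \xrightarrow{1_{S_A}} S_A \to 0 \to \cdots \to 0
\]
supplied by \cref{trivial conflations}, and using \cref{coproduct of conflations} to take their coproduct, we obtain the conflation
\[
A \to A \oplus S_A \to S_A \to 0 \to \cdots \to 0
\]
with $n+2$ terms. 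Of these terms, $A \oplus S_A$, $S_A$, and all the zero objects lie in $\S$; that is, $n+1$ of the $n+2$ terms are in $\S$. Completeness of $\S$ forces $A \in \S$, as desired.

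The only genuinely nontrivial ingredient is the construction of the conflation above, which requires us to invoke \cref{trivial conflations} and \cref{coproduct of conflations} in tandem, together with the fact that $\S$ is isomorphism-closed (so that $A \oplus S_A \in \S$). Everything else is a straightforward unwinding of the definitions of $\sim$, denseness, and completeness.
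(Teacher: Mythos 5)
Your proof is correct and follows essentially the same route as the paper: the reverse direction uses the very same conflation $A \to A \oplus S_A \to S_A \to 0 \to \cdots \to 0$ obtained as a coproduct of trivial conflations, together with completeness of $\S$. You merely spell out a few details the paper leaves implicit (the explicit witnesses $S_A = 0$, $S_0 = A$ in the forward direction, and the use of isomorphism-closure to place $A \oplus S_A$ in $\S$), which is fine.
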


\begin{proof}
If $A$ is in $\S$, then clearly $\{A\}=\{0\}$. Conversely, assume $\{A\}=\{0\}$. This means that there are objects $S_A$ and $S_0$ in $\S$ such that $A \oplus S_A \simeq S_0$. Consequently, the $n+1$ last objects in the $(n+2)$-term sequence
\[
A \rightarrow A \oplus S_A \rightarrow S_A \rightarrow 0 \rightarrow \cdots \rightarrow 0
\]
are in $\S$. This is a conflation as it is the coproduct of two trivial conflations. Hence, as $\S$ is complete, our object $A$ is also in $\S$.
\end{proof}

\begin{lemma} \label{necessary lemma}
Let $\C$ be an $n$-exangulated category with $n$ odd. Let $\G$ be an $n$-(co)generator of $\C$ and $\S$ a dense complete subcategory of $\C$ which contains $\G$. The following statements hold:
\begin{enumerate}
    \item $G_\S$ is an abelian group with binary operation $\{A\} + \{B\} \coloneqq \{A \oplus B\}$ and identity element $\{0\}$.
    \item The map
    \begin{align*}
    K_0(\C) / g(\S)
    &\xrightarrow{\mathmakebox[2em]{\simeq}} G_\S \\
    [A] + g(\S) &\longmapsto \{A\}
    \end{align*}
    is a well-defined isomorphism of groups. In particular, an object $A$ in $\C$ is contained in $\S$ if and only if $[A]$ is in $g(\S)$. 
\end{enumerate}
\end{lemma}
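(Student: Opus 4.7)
Plan: For part (1), I would first verify that the operation $\{A\} + \{B\} \coloneqq \{A \oplus B\}$ is well-defined: if $A \oplus S_A \simeq A' \oplus S_A'$ and $B \oplus S_B \simeq B' \oplus S_B'$ with $S_A, S_A', S_B, S_B' \in \S$, then $(A \oplus B) \oplus (S_A \oplus S_B) \simeq (A' \oplus B') \oplus (S_A' \oplus S_B')$, and the added summands lie in $\S$ since $\S$ is additive by \cref{additive subcategory} (using that $n$ odd forces $0 \in \S$, as noted in the discussion preceding \cref{dense + complete = n-exangulated}). Associativity and commutativity then follow from the corresponding properties of $\oplus$, and $\{0\}$ is the identity. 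For inverses, density yields an object $A'$ with $A \oplus A' \in \S$, and then $\{A\} + \{A'\} = \{A \oplus A'\} = \{0\}$ by \cref{first lemma}.

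For part (2), I would construct the isomorphism by first defining $\phi \colon \F(\C) \rightarrow G_\S$ as the linear extension of $\langle A \rangle \mapsto \{A\}$, which is a well-defined group homomorphism now that $G_\S$ is a group. To show $\phi$ descends to a homomorphism $\bar{\phi} \colon K_0(\C)/g(\S) \rightarrow G_\S$, I must check that $\phi$ kills $\R(\C)$ and that the induced map $K_0(\C) \rightarrow G_\S$ kills $g(\S)$. The second condition is immediate from \cref{first lemma}, since $A \in \S$ gives $\{A\} = \{0\}$. The first condition is the crux of the proof.

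For the Euler relation, given a conflation $X_0 \rightarrow X_1 \rightarrow \cdots \rightarrow X_{n+1}$ in $\C$, I would recursively use density of $\S$ to choose $Z_0, Z_1, \ldots, Z_n$ in $\C$ such that $X_0 \oplus Z_0 \in \S$ and $X_i \oplus Z_{i-1} \oplus Z_i \in \S$ for $i = 1, \ldots, n$. For each $j \in \{0, \ldots, n\}$, \cref{trivial conflations} yields a trivial conflation with $Z_j$ in positions $j$ and $j+1$, and \cref{coproduct of conflations} then combines the original conflation with all $n+1$ trivial ones into a single conflation $Y_0 \rightarrow Y_1 \rightarrow \cdots \rightarrow Y_{n+1}$ with $Y_0 = X_0 \oplus Z_0$, $Y_i = X_i \oplus Z_{i-1} \oplus Z_i$ for $1 \leq i \leq n$, and $Y_{n+1} = X_{n+1} \oplus Z_n$. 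By construction $Y_0, \ldots, Y_n$ lie in $\S$, so completeness forces $Y_{n+1} \in \S$ as well. Hence $\{Y_i\} = 0$ for every $i$, and in the identity $\sum_i (-1)^i \{Y_i\} = 0$ each $Z_j$ enters with total coefficient $(-1)^j + (-1)^{j+1} = 0$, so the expression reduces to $\sum_i (-1)^i \{X_i\} = 0$, as required.

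With $\bar{\phi}$ now well-defined and surjective by construction, I would finish by exhibiting a two-sided inverse $\psi \colon G_\S \rightarrow K_0(\C)/g(\S)$, $\{A\} \mapsto [A] + g(\S)$. Well-definedness uses that $A \oplus S \simeq B \oplus S'$ with $S, S' \in \S$ forces $[A] - [B] = [S'] - [S] \in g(\S)$, and the composition identities $\bar{\phi} \circ \psi = \operatorname{id}$ and $\psi \circ \bar{\phi} = \operatorname{id}$ are routine checks on generators. The "in particular" statement follows: $A \in \S$ iff $\{A\} = \{0\}$ by \cref{first lemma}, iff $[A] + g(\S)$ is trivial by injectivity of $\bar{\phi}$, iff $[A] \in g(\S)$. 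The main obstacle is the Euler relation step: density and completeness must be balanced so that the recursively chosen summands $Z_j$ make every term of the modified conflation lie in $\S$, and the adjacent-position structure of the trivial conflations produces the key cancellation $(-1)^j + (-1)^{j+1} = 0$ that eliminates all $Z_j$ contributions.
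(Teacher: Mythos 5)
Your proof is correct, and the crux---showing that the Euler relations vanish in $G_\S$---uses essentially the same device as the paper: add trivial conflations to the given conflation so that $n+1$ of the $n+2$ terms land in $\S$, invoke completeness for the remaining term, and translate via \cref{first lemma}. You leave position $n+1$ to be determined by completeness and organize the bookkeeping through the telescoping cancellation $(-1)^j+(-1)^{j+1}=0$, whereas the paper leaves position $0$ free and computes $\{\widebar{X}\}$ directly; this is a cosmetic variant (and arguably cleaner). The genuine difference is in the endgame. The paper proves $\Ker\phi\subseteq g(\S)$ by writing an arbitrary element of $K_0(\C)$ as $[A]-[G]$ with $G\in\G$ using \cref{[X]-[G]}; this is precisely where the $n$-(co)generator and the hypothesis $\G\subseteq\S$ enter its argument. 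You instead exhibit an explicit two-sided inverse $\{A\}\mapsto[A]+g(\S)$, whose well-definedness only requires that $A\oplus S\simeq B\oplus S'$ with $S,S'\in\S$ forces $[A]-[B]=[S']-[S]\in g(\S)$ (via \cref{grunnleggende K_0}). This is more elementary, bypasses \cref{[X]-[G]} entirely, and in fact establishes the lemma for an arbitrary dense complete subcategory $\S$, with no reference to $\G$; the generator hypothesis is then only needed elsewhere in the proof of \cref{main theorem}. Both routes are valid, and your recursive choice of the $Z_j$ correctly guarantees that each modified term $Y_0,\dots,Y_n$ lies in $\S$.
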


\begin{proof}
In order to show \textit{(1)}, notice first that our binary operation is well-defined, commutative, associative and has $\{0\}$ as identity element. For any object $A$ in $\C$, there exists an object $A'$ such that $A \oplus A'$ is in $\S$, by denseness of $\S$. Using \cref{first lemma}, this means that 
\[
\{A\} + \{A'\} = \{A \oplus A'\} = \{0\}.
\]
Hence, the element $\{A'\}$ is the inverse of $\{A\}$, and $G_\S$ is an abelian group.

For \textit{(2)}, let us first show that the map
\begin{align*}
    \phi \colon K_0(\C)
    &\longrightarrow G_\S \\
    [A] &\longmapsto \{A\}
\end{align*}
is well-defined. It suffices to show that the Euler relations are sent to zero. Consider a conflation
\[
X_0 \rightarrow X_1 \rightarrow \cdots \rightarrow X_n \rightarrow X_{n+1}
\]
in $\C$. For $i=1,\dots,n+1$, let $X_i'$ be an object such that $X_i \oplus X_i'$ belongs to $\S$. We can get a new conflation by adding trivial conflations involving the objects $X_i$ and $X_i'$ to the conflation above, namely
\[
\widebar{X} \rightarrow \bigoplus_{i=1}^{n+1}(X_i \oplus X_i') \rightarrow \cdots \rightarrow \bigoplus_{i=n}^{n+1}(X_i \oplus X_i') \rightarrow X_{n+1} \oplus X_{n+1}',
\]
where $\widebar{X}=X_0 \oplus X_1' \oplus X_2 \oplus \cdots \oplus X_n' \oplus X_{n+1}$. As the $n+1$ last objects in this conflation are in $\S$, so is $\widebar{X}$. Consequently, using \cref{first lemma}, we have
\begin{align*}
\{0\} = \{\widebar{X}\} &= \{X_0\} + \{X_1'\} + \{X_2\} + \cdots + \{X_n'\} + \{X_{n+1}\} \\
    &= \{X_0\} - \{X_1\} + \{X_2\} + \cdots - \{X_n\} + \{X_{n+1}\}
\end{align*}
in $G_\S$, so $\phi$ is well-defined. It is now easy to check that $\phi$ is a surjective group homomorphism.

Our last step is to show that $\Ker\phi=g(\S)$. Note that the inclusion $g(\S) \subseteq \Ker\phi$ follows immediately by \cref{first lemma}. Using \cref{[X]-[G]}, any element in $\Ker\phi$ can be written as $[A]-[G]$ for some objects $A$ in $\C$ and $G$ in $\G$. This means that
\[
\{0\} = \phi([A]-[G]) = \{A\}-\{G\}=\{A\},
\]
where the third equality follows from \cref{first lemma} and the assumption that $\S$ contains $\G$. Consequently, again using \cref{first lemma}, the object $A$ is in $\S$. This yields our reverse inclusion. Combining the isomorphism $K_0(\C)/g(\S) \simeq G_S$ and \cref{first lemma}, we see that an object $A$ is in $\S$ if and only if $[A]$ is in $g(\S)$.
\end{proof}

Our main theorem, \cref{main theorem}, extends and unifies results by Thomason, Bergh--Thaule, Matsui and Zhu--Zhuang. We also get a classification of subcategories of $n$-exact categories.

\begin{corollary} \label{corollary}
\begin{enumerate}
    \item \cite{Thomason}*{Theorem 2.1} Let $\C$ be a triangulated category. Then there is a one-to-one correspondence between the dense triangulated subcategories of $\C$ and the subgroups of $K_0(\C)$.
    \item \cite{Bergh-Thaule}*{Theorem 4.6} Let $\C$ be an $(n+2)$-angulated category with $n$ odd. Then there is a one-to-one correspondence between the dense complete $(n+2)$-angulated subcategories of $\C$ and the subgroups of $K_0(\C)$.
    \item \cite{Matsui}*{Theorem 2.7} Let $\C$ be an exact category with a (co)generator $\G$. Then there is a one-to-one correspondence between the dense $\G$-(co)resolving subcategories of $\C$ and the subgroups of $K_0(\C)$ containing the image of $\G$.
    \item \cite{Zhu-Zhuang}*{Theorem 5.7} Let $\C$ be an extriangulated category with a (co)generator $\G$. Then there is a one-to-one correspondence between the dense $\G$-(co)resolving subcategories of $\C$ and the subgroups of $K_0(\C)$ containing the image of $\G$.
    \item Let $\C$ be an $n$-exact category with $n$ odd. Let $\G$ be an $n$-(co)generator of $\C$. Then there is a one-to-one correspondence between the dense complete subcategories of $\C$ containing $\G$ and the subgroups of $K_0(\C)$ containing the image of $\G$.
\end{enumerate}
\end{corollary}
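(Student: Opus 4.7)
The plan is to derive each part of the corollary as a direct specialization of \cref{main theorem}. The strategy is uniform throughout: verify that the ambient category is $n$-exangulated for the relevant $n$, exhibit an $n$-(co)generator $\G$, and translate the notion of subcategory appearing in each classical result into the uniform notion ``dense complete subcategory containing $\G$'' used in \cref{main theorem}.

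For parts (1) and (2), concerning triangulated and $(n+2)$-angulated categories with $n$ odd, I would take $\G = \{0\}$ as the $n$-(co)generator. For every object $A$, the distinguished $(n+2)$-angle $A[-1] \to 0 \to \cdots \to 0 \to A$ witnesses this choice. With $\G = \{0\}$, the subgroup $H_\G$ is trivial, so the subgroup condition in \cref{main theorem} becomes vacuous, and the subcategory condition ``contains $\G$'' is automatic since any dense complete subcategory for $n$ odd contains $0$ by the discussion preceding \cref{dense + complete = n-exangulated}. What remains is to identify ``dense complete subcategory'' with ``dense triangulated subcategory'' (resp.\ ``dense complete $(n+2)$-angulated subcategory''). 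By \cref{dense + complete = n-exangulated}, $\S$ already inherits an $n$-exangulated structure from $\C$; the additional closure under the shift functor follows by applying completeness to the distinguished $(n+2)$-angle $A \to 0 \to \cdots \to 0 \to A[1]$, whose $n+1$ leftmost terms are in $\S$, forcing $A[1] \in \S$.

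For parts (3) and (4), concerning exact and extriangulated categories with a $1$-(co)generator $\G$, one applies \cref{main theorem} with $n=1$. The task here is to verify that a dense $\G$-(co)resolving subcategory in the sense of Matsui and Zhu--Zhuang is the same as a dense complete subcategory containing $\G$. One direction is immediate, as completeness subsumes the closure properties defining (co)resolving. For the other direction, given a dense $\G$-(co)resolving subcategory $\S$ and a conflation $A \to B \to C$ with two terms in $\S$, the missing term can be shown to lie in $\S$ by combining a denseness witness with a trivial conflation, exactly as in the direct-sum trick used in the proofs of \cref{dense + complete = n-exangulated} and \cref{main theorem}, and then invoking the resolving or coresolving closure. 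Part (5) is immediate: an $n$-exact category is $n$-exangulated, so \cref{main theorem} applies verbatim.

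The main obstacle throughout is not technical but one of bookkeeping: the careful comparison of each classical author's subcategory concept with the uniform language of \cref{main theorem}. Once the terminological equivalences are settled, each classification is a direct consequence of the main theorem.
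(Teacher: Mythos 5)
Your proposal is correct and follows essentially the same route as the paper: reduce everything to \cref{main theorem}, take $\G = \{0\}$ for the (triangulated and) $(n+2)$-angulated cases, and match the classical subcategory notions with ``dense complete containing $\G$'' for the exact and extriangulated cases. The one place where the paper does noticeably more work is part \textit{(2)}: besides closure under $\Sigma$, it explicitly verifies that every morphism in $\S$ completes to a distinguished $(n+2)$-angle inside $\S$ (via the denseness-plus-trivial-summands trick), whereas you assert that the inherited $n$-exangulated structure from \cref{dense + complete = n-exangulated} together with $\Sigma$-closure already yields the $(n+2)$-angulated structure --- that is defensible, but only if you additionally invoke the characterization of $(n+2)$-angulated categories among $n$-exangulated ones as those with $\E(C,A) \cong \C(C,\Sigma A)$ for an autoequivalence $\Sigma$, which your sketch leaves implicit.
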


\begin{proof}
Part \textit{(5)} follows immediately from \cref{main theorem} as $n$-exact categories are $n$-exangulated. 

As \textit{(2)} implies \textit{(1)} and \textit{(4)} implies \textit{(3)}, it suffices to prove \textit{(2)} and \textit{(4)}. To show \textit{(4)}, notice that in the case $n=1$, a dense subcategory containing $\G$ is complete if and only if it is $\G$-(co)resolving as defined in \cite{Zhu-Zhuang}*{Definition 5.3}. It is thus clear that \cref{main theorem} implies \textit{(4)}.

In order to prove that \textit{(2)} follows from our main result, we will use the definition of an $(n+2)$-angulated category, see \cites{Geiss-Keller-Oppermann, Bergh-Thaule 2013}. 

Let $(\C,\Sigma)$ be an $(n+2)$-angulated category. Then $\C$ has enough projectives, with $0$ as the only projective object. The same is true for injectives, and hence $\G = \{0\}$ is both an $n$-generator and an $n$-cogenerator of $\C$. As a subgroup necessarily contains the zero element, all subgroups of $K_0(\C)$ will contain the image of $\G$.

It remains to show that every complete and dense subcategory $\S$ of $\C$ has a natural structure as an $(n+2)$-angulated subcategory, by declaring the distinguished $(n+2)$-angles in $\C$ with all objects in $\S$ to be the distinguished $(n+2)$-angles in $\S$. Recall that a full isomorphism-closed subcategory $\S$ of our $(n+2)$-angulated category $(\C,\Sigma)$ is an $(n+2)$-angulated subcategory if $(\S,\Sigma)$ itself is $(n+2)$-angulated and the inclusion is an $(n+2)$-angulated functor. 

Recall from \cref{subcategories} that as $n$ is odd, the subcategory $\S$ contains $0$, which again implies that it is additive and isomorphism-closed. To show that $(\S,\Sigma)$ is $(n+2)$-angulated, the crucial parts are to check that $\S$ is closed under $\Sigma$ and that morphisms can be completed to distinguished $(n+2)$-angles. 

When we think of $\C$ as an $n$-exangulated category, the distinguished $(n+2)$-angles yield conflations when we remove the last object. To see that $\S$ is closed under $\Sigma$, let $A$ be an object in $\S$. As 
\[
A \rightarrow 0 \rightarrow \cdots \rightarrow 0 \rightarrow \Sigma A \xrightarrow{1_{\Sigma A}} \Sigma A
\]
is a distinguished $(n+2)$-angle in which the $n+1$ first objects are in $\S$, also $\Sigma A$ is in $\S$. A dual argument shows that $\Sigma^{-1} A$ is in $\S$.

Let $f \colon X_0 \rightarrow X_1$ be a morphism in $\S$. We need to show that $f$ can be completed to a distinguished $(n+2)$-angle in $\S$. As $\C$ is $(n+2)$-angulated, there is a distinguished $(n+2)$-angle
\[
X_0 \xrightarrow{f} X_1 \rightarrow X_2 \rightarrow \cdots \rightarrow X_{n+1} \rightarrow \Sigma X_0
\]
in $\C$. For $i=2,\dots,n$, use that $\S$ is dense and let $X_i'$ be an object such that $X_i \oplus X_i'$ is in $\S$. By adding trivial $(n+2)$-angles involving the objects $X_i$ and $X_i'$ to the $(n+2)$-angle above, we get a new distinguished $(n+2)$-angle
\[
X_0 \xrightarrow{f} X_1 \rightarrow X_2 \oplus X_2' \rightarrow \cdots \rightarrow \bigoplus_{i=2}^{n}(X_i \oplus X_i') \rightarrow \widebar{X} \rightarrow \Sigma X_0,
\]
where $\widebar{X}=X_2 \oplus X_3' \oplus X_4 \oplus \cdots \oplus X_n' \oplus X_{n+1}$. As the $n+1$ first objects in this sequence are contained in $\S$, so is $\widebar{X}$. Consequently, this is a distinguished $(n+2)$-angle in $\S$ which completes the morphism $f$. 

The remaining axioms of an $(n+2)$-angulated category are immediately verified using the fact that $\S$ is full. As the distinguished $(n+2)$-angles in $\S$ are chosen in such a way that the inclusion functor is $(n+2)$-angulated, we can conclude that $\S$ is an $(n+2)$-angulated subcategory of $\C$. Consequently, also \textit{(2)} follows from \cref{main theorem}.
\end{proof}

\begin{acknowledgements}
The author would like to thank her supervisor Petter Andreas Bergh for helpful discussions and comments. She would also thank Louis-Philippe Thibault for careful reading and helpful suggestions on a previous version of this paper.
\end{acknowledgements}

\begin{bibdiv}
\begin{biblist}

\bib{BT-Shah}{article}{
   author={Bennett-Tennenhaus, Raphael},
   author={Shah, Amit},
   title={Transport of structure in higher homological algebra},
   journal={arXiv:2003.02254v2},
   date={2020},
}

\bib{Bergh-Thaule 2013}{article}{
   author={Bergh, Petter Andreas},
   author={Thaule, Marius},
   title={The axioms for $n$-angulated categories},
   journal={Algebr. Geom. Topol.},
   volume={13},
   date={2013},
   number={4},
   pages={2405--2428},
}

\bib{Bergh-Thaule}{article}{
   author={Bergh, Petter Andreas},
   author={Thaule, Marius},
   title={The Grothendieck group of an $n$-angulated category},
   journal={J. Pure Appl. Algebra},
   volume={218},
   date={2014},
   number={2},
   pages={354--366},
}

\bib{Geiss-Keller-Oppermann}{article}{
   author={Geiss, Christof},
   author={Keller, Bernhard},
   author={Oppermann, Steffen},
   title={$n$-angulated categories},
   journal={J. Reine Angew. Math.},
   volume={675},
   date={2013},
   pages={101--120},
}

\bib{Herschend-Liu-Nakaoka}{article}{
   author={Herschend, Martin},
   author={Liu, Yu},
   author={Nakaoka, Hiroyuki},
   title={$n$-exangulated categories},
   journal={arXiv:1709.06689v3},
   date={2017},
}

\bib{Iyama}{article}{
   author={Iyama, Osamu},
   title={Cluster tilting for higher Auslander algebras},
   journal={Adv. Math.},
   volume={226},
   date={2011},
   number={1},
   pages={1--61},
}
 
\bib{Iyama-Nakaoka-Palu}{article}{
   author={Iyama, Osamu},
   author={Hiroyuki, Nakaoka},
   author={Yann, Palu},
   title={Auslander--Reiten theory in extriangultaed categories},
   journal={arXiv:1805.03776},
   date={2018},
}

\bib{Iyama-Oppermann}{article}{
   author={Iyama, Osamu},
   author={Oppermann, Steffen},
   title={$n$-representation-finite algebras and $n$-APR tilting},
   journal={Trans. Amer. Math. Soc.},
   volume={363},
   date={2011},
   number={12},
   pages={6575--6614},
}

\bib{Jasso}{article}{
   author={Jasso, Gustavo},
   title={$n$-abelian and $n$-exact categories},
   journal={Math. Z.},
   volume={283},
   date={2016},
   number={3-4},
   pages={703--759},
}
	
\bib{Landsburg}{article}{
   author={Landsburg, Steven E.},
   title={$K$-theory and patching for categories of complexes},
   journal={Duke Math. J.},
   volume={62},
   date={1991},
   number={2},
   pages={359--384},
}

\bib{Liu-Zhou}{article}{
   author={Liu, Yu},
   author={Zhou, Panyue},
   title={Frobenius $n$-exangulated categories},
   journal={arXiv:1909.13284},
   date={2019},
}

\bib{Matsui}{article}{
   author={Matsui, Hiroki},
   title={Classifying dense resolving and coresolving subcategories of exact
   categories via Grothendieck groups},
   journal={Algebr. Represent. Theory},
   volume={21},
   date={2018},
   number={3},
   pages={551--563},
}

\bib{Nakaoka-Palu}{article}{
   author={Nakaoka, Hiroyuki},
   author={Palu, Yann},
   title={Extriangulated categories, Hovey twin cotorsion pairs and model
   structures},
   language={English, with English and French summaries},
   journal={Cah. Topol. G\'{e}om. Diff\'{e}r. Cat\'{e}g.},
   volume={60},
   date={2019},
   number={2},
   pages={117--193},
}

\bib{Neeman 1991}{article}{
   author={Neeman, Amnon},
   title={Some new axioms for triangulated categories},
   journal={J. Algebra},
   volume={139},
   date={1991},
   number={1},
   pages={221--255},
}

\bib{Neeman 2001}{book}{
   author={Neeman, Amnon},
   title={Triangulated categories},
   series={Annals of Mathematics Studies},
   volume={148},
   publisher={Princeton University Press, Princeton, NJ},
   date={2001},
   pages={viii+449},
}

\bib{Thomason}{article}{
   author={Thomason, Robert Wayne},
   title={The classification of triangulated subcategories},
   journal={Compositio Math.},
   volume={105},
   date={1997},
   number={1},
   pages={1--27},
}

\bib{Zhu-Zhuang}{article}{
   author={Zhu, Bin},
   author={Zhuang, Xiao},
   title={Grothendieck groups in extriangulated categories},
   journal={arXiv:1912.00621v4},
   date={2019},
}

\end{biblist}
\end{bibdiv}

\end{document}